\DeclareMathOperator*{\essinf}{ess ~inf}         
\newtheorem{theorem}{Theorem}[section]
\newtheorem{proposition}{Proposition}[section]
\newtheorem{lemma}[theorem]{Lemma}
\theoremstyle{definition}
\theoremstyle{remark}
\newtheorem{remark}[theorem]{Remark}
\numberwithin{equation}{section}
\newcommand{\R}{{\mathbb R}}
\newcommand{\N}{{\mathbb N}}
\newcommand{\eps}{\varepsilon}
\newcommand{\cal}{\mathcal}
\begin{document}
\title[Singular quasilinear systems ]{Singular quasilinear elliptic systems in $\R^{N}$}
\subjclass[2010]{35J75; 35J48; 35J92}
\keywords{Singular elliptic system; $p$-Laplacian; Schauder's fixed point theorem; a priori estimate.}
\begin{abstract}
The existence of positive weak solutions to a singular quasilinear elliptic system in the whole space is established via suitable a priori estimates and Schauder's fixed point theorem.
\end{abstract}

\author{S.A. Marano}
\address{Salvatore A. Marano\\
Dipartimento di Matematica e Informatica\\
Universit\`a degli Studi di Catania \
Viale A. Doria 6, 95125 Catania, Italy}
\email{marano@dmi.unict.it}
\author{G. Marino}
\address{Greta Marino\\
Dipartimento di Matematica e Informatica \\
Universit\`a degli Studi di Catania \\
Viale A. Doria 6, 95125 Catania, Italy}
\email{greta.marino@dmi.unict.it}
\author{A. Moussaoui}
\address{Abdelkrim Moussaoui\\
Biology Department\\
A. Mira Bejaia University\\
Targa Ouzemour, 06000 Bejaia, Algeria}
\email{abdelkrim.moussaoui@univ-bejaia.dz}
\maketitle
\section{Introduction}
In this paper, we consider the following system of quasilinear elliptic equations:
\begin{equation}\label{p}
\left\{ 
\begin{array}{ll}
-\Delta_{p_1} u=a_1(x)f(u,v) & \text{in } \R^N, \\
-\Delta_{p_2} v=a_2(x)g(u,v) & \text{in } \R^N, \\ 
u,v>0 & \text{in } \R^N,
\end{array}
\right. \tag{{\rm P}} 
\end{equation}
where $N\geq 3$,  $1<p_i<N$, while $\Delta_{p_i}$ denotes the $p_i$-Laplace differential operator. Nonlinearities $f,g:\R^+\times\R^+\to\R^+$ are continuous and fulfill the condition
\begin{itemize}
\item[$({\rm H}_{f,g})$] There exist $m_i,M_i>0$, $i=1,2$, such that
\begin{equation*}
m_1s^{\alpha_1}\leq f(s,t)\leq M_1s^{\alpha_1}(1+t^{\beta_1}),
\end{equation*}
\begin{equation*}
m_2t^{\beta _2}\leq g(s,t)\leq M_2(1+s^{\alpha _2})t^{\beta_2}
\end{equation*}
for all $s,t\in\R^+$, with $-1<\alpha_1,\beta_2<0<\alpha_2,\beta_1$,
\begin{equation}\label{c2}
\alpha_1+\alpha_2<p_1-1,\;\;\beta_1+\beta_2<p_2-1,
\end{equation}
as well as
\begin{equation*}
\beta_1<\frac{p_2^*}{p_1^*}\min\{p_1-1, p_1^*-p_1\},\;\;
\alpha_2<\frac{p_1^*}{p_2^*}\min\{p_2-1, p_2^*-p_2\}.
\end{equation*}
\end{itemize}
Here, $p^*_i$ denotes the critical Sobolev exponent corresponding to $p_i$, namely $p^*_i:=\frac{Np_i}{N-p_i}$. Coefficients $a_i:\R^N\to\R$ satisfy the assumption
\begin{itemize}
\item[$({\rm H}_a)$] $a_i(x)>0$ a.e. in $\R^N$ and $a_i\in L^1(\R^N)\cap L^{\zeta_i}(\R^N)$, where
\begin{equation*}
\frac{1}{\zeta_1}\leq 1-\frac{p_1}{p_1^*}-\frac{\beta_1}{p_2^*}\, ,\;\;\frac{1}{\zeta_2}\leq 1- \frac{p_2}{p_2^*}- 
\frac{\alpha_2}{p_1^*}\, .
\end{equation*}
\end{itemize}
Let $\cal{D}^{1,p_i}(\R^N)$ be the closure of $C_0^\infty(\R^N)$ with respect to the norm
$$\Vert w\Vert_{\cal{D}^{1,p_i}(\R^N)}:=\Vert\nabla w\Vert_{L^{p_i}(\R^N)}.$$
Recall \cite[Theorem 8.3]{LL} that
\begin{equation*}
\cal{D}^{1,p_i}(\R^N)=\{w\in L^{p_i^*}(\R^N):|\nabla w|\in L^{p_i}(\R^N)\}.
\end{equation*}
Moreover, if $w\in\cal{D}^{1,p_i}(\R^N)$ then $w$ vanishes at infinity, i.e., the set $\{x\in\R^N: w(x)>k\}$ has finite measure for all $k>0$; see \cite[p. 201]{LL}.

A pair $(u,v)\in \cal{D}^{1,p_1}(\R^N)\times \cal{D}^{1,p_2}(\R^{N})$ is called a (weak) solution to \eqref{p} provided $u,v>0$ a.e. in $\R^N$ and
\begin{equation*} 
\left\{ 
\begin{array}{ll}
\int_{\R^N}|\nabla u|^{p_{1}-2}\nabla u\nabla \varphi\, dx & =\int_{\R^N}a_{1}f(u,v)\varphi\, dx,\\
\phantom{}\\
\int_{\R^N}|\nabla v|^{p_{2}-2}\nabla v\nabla \psi\, dx & =\int_{\R^N}a_{2}g(u,v)\psi\, dx
\end{array}
\right.
\end{equation*}
for every $(\varphi ,\psi )\in \cal{D}^{1,p_{1}}(\R^N)\times \cal{D}^{1,p_{2}}(\R^N)$. 

The most interesting aspect of the work probably lies in the fact that both $f$ and $g$ can exhibit singularities through $\R^{N}$, which, without loss of generality, are located at zero. Indeed, $-1<\alpha_1,\beta_2<0$ by $({\rm H}_{f,g})$. It represents a serious difficulty to overcome, and is rarely handled in the literature.

As far as we know, singular systems in the whole space have been investigated only for $p:=q:=2$, essentially exploiting the linearity of involved differential operators. In such a context, \cite{DKC, DKW, MKT} treat the so called Gierer-Meinhardt system, that arises from the mathematical modeling of important biochemical processes. Nevertheless, even in the semilinear case, \eqref{p} cannot be reduced to Gierer-Meinhardt's case once $({\rm H}_{f,g})$ is assumed. The situation looks quite different when a bounded domain takes the place of $\R^N$: many singular systems fitting the framework of \eqref{p} have been studied, and meaningful contributions are already available \cite{AC,GHS,GST,EPS,G1,G2,HMV,MS,MM1,MM2,MM3}.

Here, variational methods do not work, at least in a direct way, because the Euler functional associated with problem \eqref{p} is not well defined. A similar comment holds for sub-super-solution techniques, that are usually employed in the case of bounded domains. Hence, we were naturally led to apply fixed point results. An a priori estimate in $L^\infty(\R^N)\times L^\infty(\R^N)$ for solutions of \eqref{p} is first established (cf. Theorem \ref{T3}) by a Moser's type iteration procedure and an adequate truncation, which, due to singular terms, require a specific treatment. We next perturb \eqref{p} by introducing a parameter $\eps>0$. This produces the family of regularized systems
\begin{equation} \label{pr}
\left\{ 
\begin{array}{ll}
-\Delta_{p_1} u=a_1(x)f(u+\eps,v) & \text{in }\R^{N}, \\ 
-\Delta_{p_2} v=a_2(x)g(u,v+\eps) & \text{in }\R^{N}, \\ 
u,v>0 & \text{in }\R^{N},
\end{array}
\right.  \tag{${\rm P}_\eps$} 
\end{equation}
whose study yields useful information on the  original problem. In fact, the previous $L^\infty$- boundedness still holds for solutions to \eqref{pr}, regardless of $\eps$. Thus, via Schauder's fixed point theorem, we get a solution $(u_\eps,v_\eps)$ lying inside a rectangle given by positive lower bounds, where $\eps$ does not appear, and positive upper bounds, that may instead depend on $\eps$. Finally, letting $\eps\to 0^+$ and using the $({\rm S})_+$-property of the negative $p$-Laplacian in $\cal{D}^{1,p}(\R^N)$ (see Lemma \ref{L3}) yields a weak solution to \eqref{p}; cf. Theorem \ref{mainthm}

The rest of this paper is organized as follows. Section \ref{S2} deals with preliminary results. The a priori estimate of solutions to \eqref{p} is proven in Section \ref{S6}, while the next one treats system \eqref{pr}. Section \ref{S4} contains our existence result for problem \eqref{p}.
\section{Preliminaries}\label{S2}

Let $\Omega\subseteq\R^N$ be a measurable set, let $t\in\R$, and let $w,z\in L^p(\R^N)$. We write $m(\Omega)$ for the Lebesgue measure of $\Omega$, while $t^\pm:=\max\{\pm t,0\}$, $\Omega(w\leq t):=\{x\in\Omega: w(x)\leq t\}$, $\Vert w\Vert_p:=\Vert w\Vert_{L^p(\R^N)}$. The meaning of $\Omega(w>t)$, etc. is analogous. By definition, $w\leq z$ iff $w(x)\leq z(x)$ a.e. in $\R^N$.

Given $1\leq q<p$, neither $L^p(\R^N)\hookrightarrow L^q(\R^N)$ nor the reverse embedding hold true. However, the situation looks better for functions belonging to $L^1(\R^N)$. Indeed,
\begin{proposition}\label{reg}
Suppose $p>1$ and $w\in L^1(\R^N)\cap L^p(\R^N)$. Then $w\in L^q(\R^N)$ whatever $q\in\ ]1,p[$.
\end{proposition}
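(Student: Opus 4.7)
\medskip

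\noindent\textbf{Proof plan.} The statement is a classical interpolation fact, and I would give the quickest proof available, namely a splitting argument on the level set $\{|w|\le 1\}$ and its complement. Set
\[
A:=\{x\in\R^N:|w(x)|\le 1\},\qquad B:=\R^N\setminus A.
\]
On $A$, since $q>1$, one has the pointwise bound $|w|^q\le |w|$, so $\int_A |w|^q\,dx\le \|w\|_1<\infty$. On $B$, since $q<p$, one has $|w|^q\le |w|^p$, whence $\int_B |w|^q\,dx\le \|w\|_p^p<\infty$. Summing the two estimates gives
\[
\int_{\R^N}|w|^q\,dx\;\le\;\|w\|_1+\|w\|_p^p\;<\;\infty,
\]
which is precisely $w\in L^q(\R^N)$.

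Alternatively, and for later quantitative use in the paper, one could record the Lyapunov (log-convexity) form of the inequality: choose $\theta\in (0,1)$ so that
\[
\frac{1}{q}=\theta+\frac{1-\theta}{p},\qquad \text{i.e.}\qquad \theta=\frac{p-q}{q(p-1)},
\]
and apply H\"older's inequality to the factorization $|w|^q=|w|^{q\theta}\cdot |w|^{q(1-\theta)}$ with conjugate exponents $1/(q\theta)$ and $p/(q(1-\theta))$ (whose reciprocals sum to $1$ by the choice of $\theta$). This yields the sharper bound $\|w\|_q\le \|w\|_1^{\theta}\,\|w\|_p^{1-\theta}$, which is consistent with the previous estimate and makes the dependence on the two extreme norms explicit.

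There is essentially no serious obstacle: the only point worth watching is that the splitting argument truly needs $q\ge 1$ (so that $|w|^q\le |w|$ on $A$), which is amply guaranteed by the hypothesis $q\in \,]1,p[$. I would therefore present the short splitting proof and, if convenient for later sections, add a remark stating the Lyapunov inequality with the explicit exponent $\theta$ above.
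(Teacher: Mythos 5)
Your main argument is correct and uses the same decomposition as the paper's: split $\R^N$ at the level set $\{|w|\le 1\}$, bound the low piece by $\|w\|_1$ via $|w|^q\le|w|$, and arrive at the same final estimate $\|w\|_q^q\le\|w\|_1+\|w\|_p^p$. The one place you diverge is in treating $B=\{|w|>1\}$: you simply invoke the pointwise inequality $|w|^q\le|w|^p$ (valid since $q<p$ and $|w|>1$ there), whereas the paper applies H\"older's inequality with exponents $p/q$, $p/(p-q)$ followed by Chebyshev's inequality $m(B)\le\|w\|_p^p$ to reach the identical bound. Your route is strictly more elementary and is the proof most people would write; the paper's H\"older--Chebyshev detour buys nothing here, since it lands on the same numerical constant. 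Your alternative Lyapunov interpolation $\|w\|_q\le\|w\|_1^\theta\|w\|_p^{1-\theta}$ with $\theta=\frac{p-q}{q(p-1)}$ is also correct (the conjugate exponents $1/(q\theta)$ and $p/(q(1-\theta))$ do sum reciprocally to $1$), and it gives a genuinely sharper, homogeneous bound than the additive one, though the paper does not need this refinement.
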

\begin{proof}
Thanks to H\"{o}lder's inequality, with exponents $p/q$ and $p/(p-q)$, and Chebyshev's inequality one has 
\begin{equation*}
\begin{split}
\Vert w & \Vert_q^q=\int_{\R^N(|w|\leq 1)}|w|^qdx+\int_{\R^N(|w|>1)}|w|^qdx\\
&\leq \int_{\R^N(|w|\leq 1)}|w|\,dx+\left(\int_{\R^N(|w|>1)}|w|^pdx\right)^{q/p} [m(\R^N(|w|>1))]^{1-q/p}\\
& \leq \int_{\R^N}|w|\,dx+\left(\int_{\R^N}|w|^pdx\right)^{q/p}\left(\int_{\R^N}|w|^pdx\right)^{1-q/p}\\
& =\Vert w\Vert_1+\Vert w\Vert _p^p.
\end{split}
\end{equation*}
This completes the proof.
\end{proof}
The summability properties of $a_i$ collected below will be exploited throughout the paper.
\begin{remark}\label{R1}
Let assumption $({\rm H}_a)$ be fulfilled. Then, for any $i=1,2$,
\begin{itemize}
\item[$({\rm j}_1)$] $a_i\in L^{(p_i^*)'}(\R^N)$.
\item[$({\rm j}_2)$] $a_i\in L^{\gamma_i}(\R^N)$, where $\gamma_i:=1/(1-t_i)$, with
$$t_1:=\frac{\alpha_1+1}{p_1^*}+\frac{\beta_1}{p_2^*},\quad t_2:=\frac{\alpha_2}{p_1^*}+\frac{\beta_2+1}{p_2^*}.$$
\item[$({\rm j}_3)$]$a_i\in L^{\delta_i}(\R^N)$, for $\delta_i:=1/(1-s_i)$ and
$$s_1:=\frac{\alpha_1+1}{p_1^*},\quad s_2:=\frac{\beta_2+1}{p_2^*}.$$
\item[$({\rm j}_4)$] $a_i\in L^{\xi_i}(\R^N)$, where  $\xi_i\in\ ]p_i^*/(p_i^*-p_i),\zeta_i[$.
\end{itemize}
To verify $({\rm j}_1)$--$({\rm j}_4)$ we simply note that $\zeta_i >\max\{(p_i^*)',\gamma_i,\delta_i,\xi_i\}$ and apply Proposition \ref{reg}.
\end{remark}
Let us next show that the operator $-\Delta_p$ is of type $({\rm S})_+$ in $\cal{D}^{1,p}(\R^N)$.

\begin{proposition}\label{Sprop}
If $1<p<N$ and $\{u_n\}\subseteq\cal{D}^{1,p}(\R^N)$ satisfies
\begin{equation}\label{t1}
u_n\rightharpoonup u\;\;\text{in}\;\;\cal{D}^{1,p}(\R^N),  
\end{equation}
\begin{equation}\label{t2}
\limsup_{n\to\infty}\left\langle -\Delta_p u_n,u_n-u\right\rangle \leq 0,  
\end{equation}
then $u_n\to u$ in $\cal{D}^{1,p}(\R^N)$.
\end{proposition}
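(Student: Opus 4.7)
The plan is to follow the classical argument used for the $(S)_+$ property of the $p$-Laplacian on bounded domains, noting that the key ingredients (pointwise monotonicity inequalities, H\"older's inequality, and weak-to-weak convergence of the gradients) are all insensitive to the choice of domain.

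First, I would exploit \eqref{t1} to note that $\nabla u_n\rightharpoonup\nabla u$ in $L^{p}(\R^N)^N$, so that along with $|\nabla u|^{p-2}\nabla u\in L^{p'}(\R^N)^N$ one obtains
\begin{equation*}
\langle -\Delta_p u,u_n-u\rangle=\int_{\R^N}|\nabla u|^{p-2}\nabla u\cdot(\nabla u_n-\nabla u)\,dx\to 0.
\end{equation*}
Subtracting this from \eqref{t2} yields
\begin{equation*}
\limsup_{n\to\infty}\int_{\R^N}\bigl(|\nabla u_n|^{p-2}\nabla u_n-|\nabla u|^{p-2}\nabla u\bigr)\cdot(\nabla u_n-\nabla u)\,dx\leq 0,
\end{equation*}
and since the integrand $I_n(x)$ is pointwise nonnegative by monotonicity of $\xi\mapsto|\xi|^{p-2}\xi$, it follows that $\int_{\R^N}I_n\,dx\to 0$.

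Next, I would invoke the standard algebraic inequalities. If $p\geq 2$, there exists $c_p>0$ with $I_n(x)\geq c_p|\nabla u_n-\nabla u|^p$ pointwise, which immediately gives $\|\nabla u_n-\nabla u\|_{p}\to 0$. If $1<p<2$, the corresponding inequality is
\begin{equation*}
I_n(x)\geq c_p\frac{|\nabla u_n-\nabla u|^2}{(|\nabla u_n|+|\nabla u|)^{2-p}}.
\end{equation*}
Then applying H\"older's inequality with exponents $2/p$ and $2/(2-p)$,
\begin{equation*}
\int_{\R^N}|\nabla u_n-\nabla u|^p\,dx\leq\Bigl(\int_{\R^N}\tfrac{|\nabla u_n-\nabla u|^2}{(|\nabla u_n|+|\nabla u|)^{2-p}}\,dx\Bigr)^{\!p/2}\Bigl(\int_{\R^N}(|\nabla u_n|+|\nabla u|)^{p}\,dx\Bigr)^{\!(2-p)/2}.
\end{equation*}
The last factor stays bounded because $\{u_n\}$ is weakly convergent, hence norm-bounded in $\cal{D}^{1,p}(\R^N)$, while the first factor tends to zero by the pointwise inequality and the previous step. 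In either range of $p$, therefore, $\|\nabla u_n-\nabla u\|_{p}\to 0$, which is precisely the claim.

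I do not expect a genuine obstacle here: the fact that we work on $\R^N$ rather than on a bounded domain is irrelevant because the two algebraic inequalities are pointwise and $\cal{D}^{1,p}(\R^N)$ embeds continuously into $L^{p^*}(\R^N)$, so boundedness of the gradient norms is automatic. The only care needed is to handle the cases $p\geq 2$ and $1<p<2$ separately when applying the elementary inequalities.
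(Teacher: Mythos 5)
Your argument is correct and essentially identical to the paper's: both start from the monotonicity of $\xi\mapsto|\xi|^{p-2}\xi$ together with the vanishing of $\langle -\Delta_p u,u_n-u\rangle$ (via weak convergence of gradients) to conclude $\int_{\R^N}I_n\,dx\to 0$, then invoke the standard pointwise inequalities (Peral's Lemma A.0.5 in the paper), treating $p\geq 2$ directly and $1<p<2$ via H\"older with exponents $2/p$, $2/(2-p)$ and the boundedness of $\{\nabla u_n\}$ in $L^p$. There is no substantive difference in strategy or technique.
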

\begin{proof}
By monotonicity one has
\begin{equation*}
\left\langle-\Delta_p u_n-(-\Delta_p u),u_n-u\right\rangle \geq 0\quad\forall\, n\in\mathbb{N},
\end{equation*}
which evidently entails
\begin{equation*}
\liminf_{n\to\infty}\left\langle -\Delta_p u_n-(-\Delta_p u),u_n-u\right\rangle \geq 0.  
\end{equation*}
Via \eqref{t1}--\eqref{t2} we then get
\begin{equation*}
\limsup_{n\to\infty}\left\langle-\Delta_p u_n-(-\Delta_p u),u_n-u\right\rangle \leq 0.
\end{equation*}
Therefore,
\begin{equation}\label{00}
\lim_{n\to\infty }\int_{\R^N}\left(|\nabla u_n|^{p-2}\nabla u_n-|\nabla u|^{p-2}\nabla u\right)(\nabla u_n-\nabla u)\, dx=0.  
\end{equation}
Since \cite[Lemma A.0.5]{P} yields
\begin{equation*}
\begin{split}
& \int_{\R^N}\left(|\nabla u_n|^{p-2}\nabla u_n-|\nabla u|^{p-2}\nabla u\right)(\nabla u_n-\nabla u)\, dx \\ 
& \geq \left\{ 
\begin{array}{ll}
C_p\int_{\R^N}\frac{\vert \nabla (u_n-u)\vert^2}{(|\nabla u_n|+|\nabla u|)^{2-p}}\, dx  & \text{  if }1<p<2,\\
\phantom{}\\
C_{p}\int_{\R^N}\vert\nabla (u_n-u)\vert^p\, dx & \text{otherwise}
\end{array}%
\right.\quad\forall\, n\in\mathbb{N},
\end{split}
\end{equation*}
the desired conclusion, namely
\begin{equation*}
\lim_{n\to\infty}\int_{\R^N}|\nabla (u_n-u)|^p\, dx=0,
\end{equation*}
directly follows from \eqref{00} once $p\geq 2$. If $1<p<2$ then H\"{o}lder's inequality and \eqref{t1} lead to
\begin{equation*}
\begin{split}
& \int_{\R^N}|\nabla (u_n-u)|^p\, dx\\
& =\int_{\R^N}\frac{|\nabla (u_n-u)|^p}{(|\nabla u_n|+|\nabla u|)^{\frac{p(2-p)}{2}}}\,
(|\nabla u_n|+|\nabla u|)^{\frac{p(2-p)}{2}} dx\\
& \leq\left(\int_{\R^N}\frac{|\nabla (u_n-u)|^2}{(|\nabla u_n|+|\nabla u|)^{2-p}}\, dx\right)^{\frac{p}{2}}
\left(\int_{\R^N}(|\nabla u_n|+|\nabla u|)^p dx\right)^{\frac{2-p}{2}}\\
& \leq C\left(\int_{\R^N}\frac{|\nabla (u_n-u)|^2}{(|\nabla u_n|+|\nabla u|)^{2-p}}\, dx\right)^{\frac{p}{2}},\quad n\in\mathbb{N},
\end{split}
\end{equation*}
with appropriate $C>0$. Now, the argument goes on as before.
\end{proof}
\section{Boundedness of solutions}\label{S6}
The main result of this section, Theorem \ref{T3} below, provides an $L^\infty(\R^N)$ - a priori estimate for weak solutions to  \eqref{p}. Its proof will be performed into three steps.
\begin{lemma}[$L^{p_i^*}(\R^N)$ - uniform boundedness] 
There exists $\rho>0$ such that
\begin{equation}\label{apin1}
\max\left\{\Vert u\Vert_{p_1^*},\,\Vert v\Vert_{p_2^*}\right\}\leq\rho
\end{equation}
for every $(u,v)\in \cal{D}^{1,p_1}(\R^N)\times\cal{D}^{1,p_2}(\R^N)$ solving problem \eqref{p}.
\end{lemma}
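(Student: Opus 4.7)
The plan is to test each equation with its own solution, estimate the right-hand side through hypothesis $({\rm H}_{f,g})$ and H\"older's inequality with the integrability exponents supplied by Remark~\ref{R1}, and then close the resulting coupled algebraic system using the structural condition \eqref{c2}.

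First I would take $\varphi:=u$ in the first equation and $\psi:=v$ in the second. This gives
$$\|\nabla u\|_{p_1}^{p_1}=\int_{\R^N}a_1\,f(u,v)\,u\,dx,\qquad \|\nabla v\|_{p_2}^{p_2}=\int_{\R^N}a_2\,g(u,v)\,v\,dx.$$
Using $({\rm H}_{f,g})$ I would bound $f(u,v)u\le M_1(u^{\alpha_1+1}+u^{\alpha_1+1}v^{\beta_1})$ and symmetrically $g(u,v)v\le M_2(v^{\beta_2+1}+u^{\alpha_2}v^{\beta_2+1})$. The mixed term $\int a_1 u^{\alpha_1+1}v^{\beta_1}\,dx$ is estimated by H\"older with exponents $\gamma_1,\,p_1^*/(\alpha_1+1),\,p_2^*/\beta_1$, which add to $1$ by $({\rm j}_2)$; the pure term $\int a_1 u^{\alpha_1+1}\,dx$ by H\"older with exponents $\delta_1,\,p_1^*/(\alpha_1+1)$, adding to $1$ by $({\rm j}_3)$. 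Applying the Sobolev embedding $\mathcal{D}^{1,p_i}(\R^N)\hookrightarrow L^{p_i^*}(\R^N)$ on the left, and writing $A:=\|u\|_{p_1^*}$, $B:=\|v\|_{p_2^*}$, I would arrive at
$$A^{p_1}\le C_1 A^{\alpha_1+1}+C_2 A^{\alpha_1+1}B^{\beta_1},\qquad B^{p_2}\le C_3 B^{\beta_2+1}+C_4 A^{\alpha_2}B^{\beta_2+1},$$
for constants depending only on the data. Since $\alpha_1+1>0$ and $\beta_2+1>0$, dividing these in by $A^{\alpha_1+1}$ and $B^{\beta_2+1}$ respectively yields
$$A^{p_1-\alpha_1-1}\le C_1+C_2 B^{\beta_1},\qquad B^{p_2-\beta_2-1}\le C_3+C_4 A^{\alpha_2},$$
where both left-hand exponents are strictly positive by \eqref{c2}.

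Substituting the second bound into the first gives
$$A^{p_1-\alpha_1-1}\le C_1+C_2\bigl(C_3+C_4 A^{\alpha_2}\bigr)^{\beta_1/(p_2-\beta_2-1)}\le C_5+C_6 A^{\alpha_2\beta_1/(p_2-\beta_2-1)}.$$
The key structural observation, and the main obstacle to overcome, is that \eqref{c2} forces
$$\frac{\alpha_2\beta_1}{p_2-\beta_2-1}<p_1-\alpha_1-1,$$
because $\alpha_2<p_1-\alpha_1-1$ and $\beta_1<p_2-\beta_2-1$. Hence the nonlinear scalar inequality for $A$ confines $A$ to a bounded interval; plugging the resulting bound back into the inequality for $B$ bounds $B$ in turn. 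Choosing $\rho$ to be the maximum of these two universal thresholds produces \eqref{apin1}.

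The only subtlety is verifying that all H\"older exponents used above are admissible and that $a_i$ belong to the required Lebesgue spaces; the first is a matter of routine bookkeeping using \eqref{c2} and the smallness conditions on $\alpha_2,\beta_1$ in $({\rm H}_{f,g})$, and the second is exactly what Remark~\ref{R1} guarantees. Once the chain of estimates is set up correctly, the final step is purely elementary.
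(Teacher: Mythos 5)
Your proposal is correct and begins exactly as the paper does: test each equation with its own solution, bound via $({\rm H}_{f,g})$, apply H\"older with the exponent triples from $({\rm j}_2)$--$({\rm j}_3)$ and Sobolev embedding, and divide out $A^{\alpha_1+1}$ and $B^{\beta_2+1}$ to reach the pair of coupled inequalities. Where you diverge from the paper is in how the algebraic system is closed. The paper adds the two inequalities together, rearranges into
$$A^{\alpha_2}\bigl(A^{p_1-1-\alpha_1-\alpha_2}-c\bigr)+B^{\beta_1}\bigl(B^{p_2-1-\beta_1-\beta_2}-c'\bigr)\le c'',$$
and performs a four-case sign analysis depending on whether each bracket is nonnegative. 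You instead substitute the $B$-bound directly into the $A$-bound, obtaining a single scalar inequality $A^{p_1-\alpha_1-1}\le C_5+C_6\,A^{\alpha_2\beta_1/(p_2-\beta_2-1)}$, and then observe that \eqref{c2} gives the strict exponent gap
$$\frac{\alpha_2\beta_1}{p_2-\beta_2-1}<\alpha_2<p_1-\alpha_1-1,$$
so that $A$ is bounded, and $B$ in turn. Both routes ultimately exploit only \eqref{c2}; your substitution argument is arguably cleaner, avoiding the paper's case distinction entirely, while the paper's rearrangement is symmetric in $(u,v)$ and hence needs no choice of which variable to eliminate first. One small inaccuracy in your last paragraph: the ``smallness conditions on $\alpha_2,\beta_1$'' involving $p_i^*$ in $({\rm H}_{f,g})$ are not actually used here (they enter later in the Moser iteration); only \eqref{c2} is needed to close this step.
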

\begin{proof}
Multiply both equations in \eqref{p} by $u$ and $v$, respectively, integrate over $\R^N$, and use $({\rm H}_{f,g})$ to arrive at
\begin{equation*}
\begin{split}
\Vert\nabla u\Vert_{p_1}^{p_1}=\int_{\R^N} a_1 f(u,v)u\, dx\leq M_1\int_{\R^N} a_1 u^{\alpha_1+1}(1+v^{\beta_1})\, dx,\\
\Vert\nabla v\Vert_{p_2}^{p_2}=\int_{\R^N} a_2 g(u,v)v\, dx\leq M_2\int_{\R^N} a_2 (1+u^{\alpha_2})v^{\beta_2+1}\, dx.
\end{split}
\end{equation*}
Through the embedding $\cal{D}^{1,p_i}(\R^N)\hookrightarrow L^{p_i^*}(\R^N)$, besides H\"{o}lder's inequality, we obtain
\begin{equation*}
\begin{split}
\Vert\nabla u\Vert_{p_1}^{p_1} & \leq M_1\left(\Vert a_1\Vert _{\delta_1}\Vert u\Vert _{p_1^*}^{\alpha _1+1}
+\Vert a_1\Vert_{\gamma_1}\Vert u\Vert_{p_1^*}^{\alpha_1+1}\Vert v\Vert_{p_2^*}^{\beta_1}\right) \\
& \leq C_1\Vert \nabla u\Vert_{p_1}^{\alpha_1+1}\left(\Vert a_1\Vert_{\delta_1}+\Vert a_1\Vert_{\gamma_1}
\Vert\nabla v\Vert_{p_2}^{\beta_1}\right);
\end{split}
\end{equation*}
cf. also Remark \ref{R1}. Likewise,
\begin{equation*}
\Vert\nabla v\Vert_{p_2}^{p_2}\leq C_2\Vert\nabla v\Vert_{p_2}^{\beta_2+1}\left(\Vert a_2\Vert_{\delta_2}
+\Vert a_2\Vert_{\gamma_2}\Vert\nabla u\Vert_{p_1}^{\alpha_2}\right).
\end{equation*}
Thus, a fortiori,
\begin{equation}\label{grad}
\begin{split}
\Vert\nabla u\Vert_{p_1}^{p_1-1-\alpha_1} & \leq C_1\left(\Vert a_1\Vert_{\delta _1}+\Vert a_1\Vert_{\gamma_1}
\Vert\nabla v\Vert_{p_2}^{\beta_1}\right),\\
\Vert \nabla v\Vert_{p_2}^{p_2-1-\beta_2} & \leq C_2\left(\Vert a_2\Vert _{\delta_2}+\Vert a_2\Vert_{\gamma_2}
\Vert \nabla u\Vert_{p_1}^{\alpha_2}\right),
\end{split}
\end{equation}
which imply
\begin{equation*}
\begin{split}
& \Vert\nabla u\Vert_{p_1}^{p_1-1-\alpha_1}+\Vert \nabla v\Vert_{p_2}^{p_2-1-\beta_2}\\
& \leq C_1\left(\Vert a_1\Vert_{\delta _1}+\Vert a_1\Vert_{\gamma_1}\Vert\nabla v\Vert_{p_2}^{\beta_1}\right)
+C_2\left(\Vert a_2\Vert_{\delta_2}+\Vert a_2\Vert_{\gamma_2}\Vert\nabla u\Vert_{p_1}^{\alpha_2}\right).
\end{split}
\end{equation*}
Rewriting this inequality as
\begin{equation}\label{use1}
\begin{split}
\Vert\nabla u\Vert_{p_1}^{\alpha_2} & \left(\Vert\nabla u\Vert_{p_1}^{p_1-1-\alpha_1-\alpha_2}-C_2\Vert a_2\Vert_{\gamma_2}\right)\\
& +\Vert\nabla v\Vert_{p_2}^{\beta_1}\left(\Vert\nabla v\Vert_{p_2}^{p_2-1-\beta_1-\beta_2}-C_1\Vert a_1\Vert_{\gamma_1}\right) \\
& \leq C_1\Vert a_1\Vert_{\delta_1}+C_2\Vert a_2\Vert_{\delta_2},
\end{split}
\end{equation}
four situations may occur. If
$$\Vert\nabla u \Vert_{p_1}^{p_1-1-\alpha_1-\alpha_2}\leq C_2\Vert a_2 \Vert_{\gamma_2}\, ,\quad
\Vert \nabla v\Vert_{p_2}^{p_2-1-\beta_1-\beta_2}\leq C_1\Vert a_1\Vert_{\gamma_1}$$
then \eqref{apin1} follows from $({\rm j}_2)$ of Remark \ref{R1}, conditions \eqref{c2}, and the embedding $\cal{D}^{1,p_i}(\R^N)\hookrightarrow L^{p_i^*}(\R^N)$. Assume next that 
\begin{equation}\label{use2}
\Vert \nabla u\Vert_{p_1}^{p_1-1-\alpha_1-\alpha_2}>C_2\Vert a_2\Vert_{\gamma_2}\, ,\quad
\Vert \nabla v\Vert_{p_2}^{p_2-1-\beta_1-\beta_2}>C_1\Vert a_1\Vert_{\gamma_1}\, .
\end{equation}
Thanks to \eqref{use1} one has
\begin{equation*}
\Vert\nabla u\Vert_{p_1}^{\alpha_2}(\Vert\nabla u\Vert_{p_1}^{p_1-1-\alpha_1-\alpha_2}-C_2\Vert a_2\Vert_{\gamma_2})\leq C_1\Vert a_1\Vert _{\delta_1}+C_2\Vert a_2\Vert_{\delta_2},
\end{equation*}
whence, on account of \eqref{use2},
\begin{equation*}
\begin{split}
\Vert\nabla u\Vert_{p_1}^{p_1-1-\alpha_1-\alpha_2} & 
\leq \frac{C_1\Vert a_1\Vert_{\delta_1}+C_2\Vert a_2\Vert_{\delta_2}}{\Vert\nabla u\Vert_{p_1}^{\alpha_2}}
+C_2\Vert a_2\Vert_{\gamma_2} \\
& \leq\frac{C_1\Vert a_1\Vert_{\delta_1}+C_2\Vert a_2\Vert_{\delta _2}}{\Vert a_2\Vert_{\gamma_2}^{\frac{\alpha_2}{p_1-1-\alpha_1-\alpha_2}}}+C_2\Vert a_2\Vert_{\gamma_2}.
\end{split}
\end{equation*}
A similar inequality holds true for $v$. So, \eqref{apin1} is achieved reasoning as before. Finally, if
\begin{equation}\label{use3}
\Vert\nabla u\Vert_{p_1}^{p_1-1-\alpha_1-\alpha_2}\leq C_2\Vert a_2 \Vert_{\gamma_2}\, ,\quad
\Vert\nabla v\Vert_{p_2}^{p_2-1-\beta_1-\beta_2}> C_1 \Vert a_1\Vert_{\gamma_1}
\end{equation}
then \eqref{grad} and \eqref{use3} entail
$$\Vert\nabla v\Vert_{p_2}^{p_2-1-\beta_2}\leq C_2\left[ \Vert a_2 \Vert_{\delta_2}+ \Vert a_2 \Vert_{\gamma_2}
\left( C_2\Vert a_2\Vert_{\gamma_2}\right)^{\frac{\alpha_2}{p_1-1-\alpha_1-\alpha_2}}\right].$$
By \eqref{c2} again we thus get
$$\max\{\Vert\nabla u\Vert_{p_1}, \Vert\nabla v\Vert_{p_2}\}\leq C_3\, ,$$
where $C_3>0$. This yields \eqref{apin1}, because $\cal{D}^{1,p_i}(\R^N)\hookrightarrow L^{p_i^*}(\R^N)$. The last case, i.e., 
\begin{equation*}
\Vert\nabla u\Vert_{p_1}^{p_1-1-\alpha_1-\alpha_2}>C_2\Vert a_2 \Vert_{\gamma_2}\, ,\quad
\Vert\nabla v\Vert_{p_2}^{p_2-1-\beta_1-\beta_2}\leq C_1 \Vert a_1\Vert_{\gamma_1}
\end{equation*}
is analogous.
\end{proof}
To shorten notation, write
$${\cal D}^{1,p_i}(\R^N)_+:=\{ w\in{\cal D}^{1,p_i}(\R^N): w\geq 0\;\text{ a.e. in }\;\R^N\}.$$
\begin{lemma}[Truncation] Let $(u,v)\in \cal{D}^{1,p_1}(\R^N)\times\cal{D}^{1,p_2}(\R^N)$ be a weak solution of \eqref{p}. Then
\begin{equation}\label{trunc1}
\int_{\R^N(u>1)}\vert\nabla u\vert ^{p_1-2}\nabla u\nabla\varphi\, dx\leq M_1\int_{\R^N(u>1)} a_1(1+v^{\beta_1})\varphi\, dx,
\end{equation}
\begin{equation}\label{trunc2}
\int_{\R^N(v>1)}\vert\nabla v\vert^{p_2-2}\nabla v\nabla\psi\, dx\leq M_2\int_{\R^N(v>1)} a_2(1+u^{\alpha_2})\psi\, dx
\end{equation}
for all $(\varphi,\psi)\in\cal{D}^{1,p_1}(\R^N)_+\times\cal{D}^{1,p_2}(\R^N)_+$.
\end{lemma}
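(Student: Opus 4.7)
The plan is to test the first equation of \eqref{p} with a regularized version of $\chi_{\{u>1\}}\varphi$ that actually lies in $\cal{D}^{1,p_1}(\R^N)$. Concretely, I set
\[
\eta_\eps(s):=\min\!\left\{\frac{(s-1)^+}{\eps},\,1\right\},\quad s\in\R,\ \eps>0,
\]
which is Lipschitz with $\eta_\eps(0)=0$, $0\leq\eta_\eps\leq 1$, $\eta_\eps(s)\to 1$ for $s>1$ (and $=0$ for $s\leq 1$) as $\eps\to 0^+$, and $\eta'_\eps\geq 0$. A symmetric argument will give \eqref{trunc2}, so I focus on \eqref{trunc1}.

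First I assume $\varphi\in\cal{D}^{1,p_1}(\R^N)_+\cap L^\infty(\R^N)$. Then $\eta_\eps(u)\varphi$ is bounded pointwise by $\varphi\in L^{p_1^*}(\R^N)$, and its gradient
\[
\nabla\bigl(\eta_\eps(u)\varphi\bigr)=\eta_\eps(u)\nabla\varphi+\eta'_\eps(u)\varphi\nabla u
\]
lies in $L^{p_1}(\R^N)$ since the second summand is dominated in absolute value by $(\Vert\varphi\Vert_\infty/\eps)|\nabla u|$. Consequently $\eta_\eps(u)\varphi\in\cal{D}^{1,p_1}(\R^N)$ is admissible in the weak formulation, yielding
\[
\int_{\R^N}\eta_\eps(u)|\nabla u|^{p_1-2}\nabla u\cdot\nabla\varphi\, dx+\int_{\R^N}\eta'_\eps(u)\varphi|\nabla u|^{p_1}\, dx=\int_{\R^N}a_1 f(u,v)\eta_\eps(u)\varphi\, dx.
\]
The second integral on the left is nonnegative and can be dropped. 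Letting $\eps\to 0^+$, dominated convergence applies to both remaining terms: the left-hand integrand is controlled by $|\nabla u|^{p_1-1}|\nabla\varphi|\in L^1(\R^N)$ via H\"older's inequality, and the right-hand one by $a_1 f(u,v)\varphi$, which is integrable because it equals the right-hand side of the weak formulation with test $\varphi$. Then $({\rm H}_{f,g})$ combined with $u^{\alpha_1}\leq 1$ on $\{u>1\}$ (recall $\alpha_1<0$) delivers \eqref{trunc1} for bounded $\varphi$.

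For an arbitrary $\varphi\in\cal{D}^{1,p_1}(\R^N)_+$, I apply the bounded-case inequality to the truncations $\varphi_k:=\min\{\varphi,k\}\in\cal{D}^{1,p_1}(\R^N)_+\cap L^\infty(\R^N)$ and pass to the limit $k\to\infty$: dominated convergence (with majorant $|\nabla u|^{p_1-1}|\nabla\varphi|$) on the left and monotone convergence on the right give the desired inequality. The main technical hurdle is precisely the admissibility of $\eta_\eps(u)\varphi$: the factor $(1/\eps)|\nabla u|\chi_{\{1<u<1+\eps\}}$ arising from $\eta'_\eps$ can be absorbed into $L^{p_1}$ only after $\varphi$ has been replaced by a bounded surrogate, which is what dictates the two-stage approximation rather than a direct substitution with $\chi_{\{u>1\}}\varphi$.
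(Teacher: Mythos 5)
Your argument follows essentially the same route as the paper: test the weak formulation with $(\eta\circ u)\varphi$, where $\eta$ is a cutoff approximating $\chi_{(1,\infty)}$, drop the nonnegative term containing $\eta'\geq 0$, and pass to the limit using $({\rm H}_{f,g})$ together with $u^{\alpha_1}\leq 1$ on $\{u>1\}$; whether the cutoff is piecewise linear or $C^1$ is immaterial, since Stampacchia's chain rule applies in either case. The one genuine difference is your two-stage approximation, first for $\varphi\in\cal{D}^{1,p_1}(\R^N)_+\cap L^\infty(\R^N)$ and then for general $\varphi$ via the truncations $\varphi_k=\min\{\varphi,k\}$, and this is in fact a useful refinement: the paper substitutes $(\eta_\delta\circ u)\varphi$ directly into the weak formulation for arbitrary $\varphi\in\cal{D}^{1,p_1}(\R^N)_+$ without verifying that this product lies in $\cal{D}^{1,p_1}(\R^N)$, and the term $(\eta_\delta'\circ u)\varphi\nabla u$ need not belong to $L^{p_1}(\R^N)$ when $\varphi$ is unbounded (H\"older only gives $\varphi\nabla u\in L^q$ with $q=Np_1/(2N-p_1)<p_1$). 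Your detour through bounded test functions closes this gap cleanly, at the cost of one extra limit taken by monotone convergence on the right and dominated convergence on the left.
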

\begin{proof}
Pick a $C^1$ cut-off function $\eta:\R\to [0,1]$ such that
\begin{equation*}
\eta(t)=\left\{
\begin{array}{ll}
0 & \text{ if }t\leq 0, \\
1 & \text{ if }t\geq 1,
\end{array}
\right. 
\quad\eta'(t)\geq 0\quad\forall\, t\in [0,1],
\end{equation*}
and, given $\delta>0$, define $\eta_\delta(t):=\eta\left(\frac{t-1}{\delta}\right)$. If $w\in\cal{D}^{1,p_i}(\R^N)$ then
\begin{equation}\label{5}
\eta_{\delta}\circ w\in {\cal D}^{1,p_i}(\R^N),\quad\nabla (\eta_{\delta}\circ w)=(\eta_{\delta}'\circ w)\nabla w,
\end{equation}
as a standard verification shows.\\
Now, fix $(\varphi,\psi)\in\cal{D}^{1,p_1}(\R^N)_+\times\cal{D}^{1,p_2}(\R^N)_+$. Multiply the first equation in \eqref{p} by $(\eta_\delta\circ u)\varphi$, integrate over $\R^N$, and use $({\rm H}_{f,g})$ to achieve
\begin{equation*}
\int_{\R^N}|\nabla u|^{p_1-2}\nabla u\nabla ((\eta_\delta\circ u)\varphi)\, dx
\leq M_1\int_{\R^N} a_1u^{\alpha_1}(1+v^{\beta _1})(\eta_\delta\circ u)\varphi\, dx.
\end{equation*}
By \eqref{5} we have
\begin{equation*}
\begin{split}
\int_{\R^N} & |\nabla u|^{p_1-2}\nabla u\nabla ((\eta_\delta\circ u)\varphi)\, dx\\
& =\int_{\R^N}|\nabla u|^{p_1}(\eta_\delta'\circ u)\varphi \,dx
+\int_{\R^N}(\eta_\delta\circ u)|\nabla u|^{p_1-2}\nabla u\nabla\varphi\,dx, 
\end{split}
\end{equation*}
while $\eta_\delta'\circ u\geq 0$ in $\R^N$. Therefore,
\begin{equation*}
\int_{\R^N}(\eta_\delta\circ u)|\nabla u|^{p_1-2}\nabla u\nabla\varphi\,dx\leq
M_1\int_{\R^N} a_1u^{\alpha_1}(1+v^{\beta _1})(\eta_\delta\circ u)\varphi\, dx.
\end{equation*}
Letting $\delta\to 0^+$ produces \eqref{trunc1}. The proof of \eqref{trunc2} is similar.
\end{proof}
\begin{lemma}[Moser's iteration]\label{L3}
There exists $R>0$ such that
\begin{equation}\label{apin2}
\max\{\Vert u\Vert_{L^\infty(\Omega_1)},\Vert v\Vert_{L^\infty(\Omega_2)}\}\leq R,
\end{equation}
where
$$\Omega_1:=\R^N(u>1)\quad\text{and}\quad\Omega_2:=\R^N(v>1),$$
for every $(u,v)\in \cal{D}^{1,p_1}(\R^N)\times\cal{D}^{1,p_2}(\R^N)$ solving problem \eqref{p}.
\end{lemma}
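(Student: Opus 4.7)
The proof is a Moser-type iteration carried out separately on each component. The key observation is that on $\Omega_1=\{u>1\}$ the singular factor $u^{\alpha_1}$ is bounded by $1$ (since $\alpha_1<0$), so the truncated inequality \eqref{trunc1} is a genuine differential inequality for a subsolution with \emph{subcritical} right-hand side $M_1 a_1(1+v^{\beta_1})$, and symmetrically for $v$ on $\Omega_2$. The $L^{p_i^*}$-bound $\max\{\Vert u\Vert_{p_1^*},\Vert v\Vert_{p_2^*}\}\leq\rho$ from the previous lemma is the starting point; it also controls the coupling terms, giving $v^{\beta_1}\in L^{p_2^*/\beta_1}$ and $u^{\alpha_2}\in L^{p_1^*/\alpha_2}$ uniformly.

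Consider first the $u$-component. For $L>1$ and $s\geq 1$, set $w:=(u-1)_+$ and $w_L:=\min(w,L)$. Since $u$ vanishes at infinity, $\Omega_1$ has finite Lebesgue measure, hence $w\in\cal{D}^{1,p_1}(\R^N)_+$ and the bounded function $\varphi:=w_L^{p_1(s-1)+1}\in\cal{D}^{1,p_1}(\R^N)_+$ is admissible in \eqref{trunc1}. Substituting, applying the chain rule $\nabla w_L^s=sw_L^{s-1}\nabla w_L$ and the identity $|\nabla u|^{p_1-2}\nabla u\cdot\nabla w_L=|\nabla w_L|^{p_1}$, and then invoking $\cal{D}^{1,p_1}(\R^N)\hookrightarrow L^{p_1^*}(\R^N)$ on $w_L^s$, one obtains a Caccioppoli--Sobolev inequality
\begin{equation*}
\Vert w_L\Vert_{sp_1^*}^{sp_1}\leq Cs^{p_1-1}\int a_1(1+v^{\beta_1})u^{p_1(s-1)+1}\, dx.
\end{equation*}
A threefold H\"{o}lder decomposition on the right, with factors $a_1\in L^{\xi_1}$ (via $({\rm j}_4)$), $v^{\beta_1}\in L^{p_2^*/\beta_1}$, and $u^{p_1(s-1)+1}\in L^{\xi_1'}$, where the conjugate $\xi_1'$ is chosen so that $1/\xi_1+\beta_1/p_2^*+1/\xi_1'=1$, then yields
\begin{equation*}
\Vert w_L\Vert_{sp_1^*}^{sp_1}\leq Ks^{p_1-1}\Vert u\Vert_{[p_1(s-1)+1]\xi_1'}^{p_1(s-1)+1},
\end{equation*}
with $K$ independent of $s$ and $L$.

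Since $({\rm j}_4)$ provides $\xi_1>p_1^*/(p_1^*-p_1)=N/p_1$ strictly, the conjugate satisfies $\xi_1'<p_1^*/p_1$, whence $\lambda:=p_1^*/(p_1\xi_1')>1$. Define $(s_k)$ by $s_0:=1$ and the recursion $[p_1(s_{k+1}-1)+1]\xi_1'=s_k p_1^*$, i.e., $s_{k+1}=\lambda s_k+(p_1-1)/p_1$, so that $s_k\to\infty$ geometrically. Iterate the displayed inequality, pass to the limit $L\to\infty$ via monotone convergence at each stage (inductively verifying $\Vert u\Vert_{s_k p_1^*}<\infty$), take $s_{k+1}p_1$-th roots, and sum the resulting logarithmic series, whose terms decay like $(\log s_k)/s_k$ and hence geometrically in $k$: this yields $\Vert u\Vert_{L^\infty(\Omega_1)}\leq R_1$, where $R_1$ depends only on $\rho$ and the data. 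The identical argument, with $(u,p_1,\alpha_1,\beta_1)$ and $(v,p_2,\beta_2,\alpha_2)$ exchanged, applied to \eqref{trunc2} gives $\Vert v\Vert_{L^\infty(\Omega_2)}\leq R_2$; setting $R:=\max\{R_1,R_2\}$ establishes \eqref{apin2}.

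The main technical obstacle is the H\"{o}lder bookkeeping, specifically the need to secure $\xi_1>N/p_1$ \emph{strictly} so that $\lambda>1$ and the iteration closes geometrically. This rests on $({\rm j}_4)$, which in turn depends on $({\rm H}_a)$ together with the structural bound $\beta_1<(p_2^*/p_1^*)(p_1^*-p_1)$ in $({\rm H}_{f,g})$, and analogously on $\alpha_2<(p_1^*/p_2^*)(p_2^*-p_2)$ for the $v$-iteration. A secondary delicate point is the handling of the truncation $L\to\infty$: each stage of the iteration must be preceded by a finiteness check before the monotone limit can be taken.
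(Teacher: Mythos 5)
Your overall architecture mirrors the paper's: truncate, plug a power of the truncation into the truncated inequality \eqref{trunc1}, invoke Sobolev to produce a Caccioppoli-type estimate, apply H\"{o}lder with the integrability classes from Remark \ref{R1}, and iterate geometrically starting from the $L^{p_1^*}$-bound. The cosmetic difference (you iterate on $w_L=\min\{(u-1)_+,L\}$, the paper on $u_M=\min\{u,M\}$ inside $\Omega_1$) is immaterial, and the passage $L\to\infty$ by monotone convergence at each stage is exactly the paper's Fatou step.

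There is, however, a concrete arithmetic error in the H\"{o}lder bookkeeping that breaks the closure of the iteration as you have written it. You define $\xi_1'$ by the three-factor relation $1/\xi_1+\beta_1/p_2^*+1/\xi_1'=1$, so $1/\xi_1'=1-1/\xi_1-\beta_1/p_2^*$. You then claim that $\xi_1>p_1^*/(p_1^*-p_1)=N/p_1$ implies $\xi_1'<p_1^*/p_1$. That implication is valid only for the \emph{ordinary} H\"{o}lder conjugate; with your $\xi_1'$ it need not hold, because the $\beta_1/p_2^*>0$ shift works against you. Indeed $\xi_1>N/p_1$ only gives $1/\xi_1'>p_1/p_1^*-\beta_1/p_2^*$, which does not force $1/\xi_1'>p_1/p_1^*$. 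Concretely: take $N=4$, $p_1=p_2=2$ (so $p_1^*=p_2^*=4$), $\beta_1=1$, $\zeta_1=4$ (admissible since $1/\zeta_1=1/4=1-2/4-1/4$), and $\xi_1=3\in\ ]N/p_1,\zeta_1[\ =\ ]2,4[$. Then $1/\xi_1'=1-1/3-1/4=5/12$, hence $\xi_1'=12/5>2=p_1^*/p_1$, and your $\lambda=p_1^*/(p_1\xi_1')<1$: the iteration stalls rather than running off to $\infty$.

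To make $\lambda>1$ you actually need the sharper requirement $\xi_1>\bigl(1-p_1/p_1^*-\beta_1/p_2^*\bigr)^{-1}$ (a strictly larger threshold than $N/p_1$ as soon as $\beta_1>0$). This is achievable by pushing $\xi_1$ up towards $\zeta_1$, but only when the inequality in $({\rm H}_a)$ is strict, and in any case you must say so explicitly; invoking only $({\rm j}_4)$ is insufficient. The paper avoids this pitfall by splitting $1+v^{\beta_1}$ into two terms and keeping $\xi_1'$ as the genuine two-factor conjugate, so that $\xi_1>N/p_1$ does give $\xi_1'<p_1^*/p_1$ (the $v^{\beta_1}$ term is then absorbed through $\Vert a_1\Vert_{\zeta_1}$ and the finiteness of $m(\Omega_1)$). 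I would also note that in your ``threefold'' H\"{o}lder you must treat the constant $1$ inside $1+v^{\beta_1}$ separately or at least record that $1+v^{\beta_1}\in L^{p_2^*/\beta_1}(\Omega_1)$ because $m(\Omega_1)<\infty$; as written, the $L^{p_2^*/\beta_1}$-factor is only $v^{\beta_1}$ and silently drops the constant.
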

\begin{proof} Given $w\in L^p(\Omega_1)$, we shall write $\Vert w\Vert_p$ in place of $\Vert w\Vert_{L^p(\Omega_1)}$ when no confusion can arise. Observe that $m(\Omega_1)<+\infty$ and define, provided $M>1$,
$$u_M(x):=\min\{u(x),M\},\quad x\in\R^N.$$ 
Choosing $\varphi:=u_M^{\kappa p_1+1}$, with $\kappa\geq 0$, in \eqref{trunc1} gives
\begin{equation} \label{s41}
\begin{split}
(\kappa p_1+1)\int_{\Omega_1(u\leq M)} &  u_M^{\kappa p_1}\vert\nabla u\vert^{p_1-2}\nabla u\nabla u_M\, dx\\
& \leq M_1\int_{\Omega_1} a_1(1+v^{\beta_1}) u_M^{\kappa p_1+1}\, dx. 
\end{split}
\end{equation}
Through the Sobolev embedding theorem one has
\begin{equation*}
\begin{split}
& (\kappa p_1+1)\int_{\Omega_1(u\leq M)} u_M^{\kappa p_1}\vert\nabla u\vert ^{p_1-2}\nabla u\nabla u_M\, dx \\
& =(\kappa p_1+1)\int_{\Omega_1(u\leq M)}(|\nabla u|u^{\kappa})^{p_1}dx 
=\frac{\kappa p_1+1}{(\kappa +1)^{p_1}}\int_{\Omega_1(u\leq M)}|\nabla u^{\kappa+1}|^{p_1} dx \\
& =\frac{\kappa p_1+1}{(\kappa+1)^{p_1}}\int_{\Omega_1}|\nabla u_M^{\kappa+1}|^{p_1}dx
\geq C_1\frac{\kappa p_1+1}{(\kappa+1)^{p_1}}\Vert u_M^{\kappa+1}\Vert_{p_1^*}^{p_1}
\end{split}
\end{equation*}
for appropriate $C_1>0$. By Remark \ref{R1}, H\"{o}lder's inequality entails 
\begin{equation*}
\begin{split}
\int_{\Omega_1} a_1(1+v^{\beta_1})u_M^{\kappa p_1+1} dx & \leq\int_{\Omega_1}a_1(1+v^{\beta_1})u^{\kappa p_1+1}dx\\
& \leq\left(\Vert a_1 \Vert_{\xi_1}+ \Vert a_1 \Vert_{\zeta_1} \Vert v \Vert_{p_2^*}^{\beta_1}\right) 
\Vert u \Vert_{(\kappa p_1+ 1) \xi_1'}^{\kappa p_1+ 1}.
\end{split}
\end{equation*}
Hence, \eqref{s41} becomes
\begin{equation*}
\frac{\kappa p_1+1}{(\kappa+1)^{p_1}}\Vert u_M^{\kappa+1}\Vert_{p_1^*}^{p_1}\leq C_2\left(\Vert a_1\Vert_{\xi_1}+
\Vert a_1\Vert_{\zeta_1}\Vert v\Vert_{p_2^*}^{\beta_1}\right)\Vert u\Vert _{(\kappa p_1+1)\xi _1'}^{\kappa p_1+1}.
\end{equation*}
Since $u(x)=\displaystyle{\lim_{M\to\infty}}u_M(x)$ a.e. in $\R^N$, using the Fatou lemma we get
\begin{equation*}
\frac{\kappa p_1+1}{(\kappa+1)^{p_1}}\Vert u\Vert_{(\kappa+1)p_1^*}^{(\kappa+1)p_1}
\leq C_2\left(\Vert a_1\Vert_{\xi_1}+\Vert a_1\Vert_{\zeta_1}\Vert v\Vert_{p_2^*}^{\beta_1}\right)
\Vert u\Vert_{(\kappa p_1+1)\xi _1'}^{\kappa p_1+1},
\end{equation*}
namely
\begin{equation} \label{dis}
\Vert u\Vert_{(\kappa+1)p_1^*}\leq C_3^{\eta(\kappa)}\sigma(\kappa)
\left(1+\Vert v\Vert_{p_2^*}^{\beta_1}\right)^{\eta(\kappa)}
\Vert u\Vert_{(\kappa p_1+1)\xi _1'}^{\frac{\kappa p_1+1}{(\kappa+1)p_1}},
\end{equation}
where $C_3>0$, while
$$\eta(\kappa):=\frac{1}{(\kappa+1)p_1},\quad \sigma(\kappa):=\left[\frac{\kappa+1}{(\kappa p_1+1)^{1/p_1}}\right] ^{\frac{1}{\kappa +1}}.$$
Let us next verify that
$$(\kappa+1) p_1^* >(\kappa p_1+1) \xi_1'\quad\forall\, \kappa\in\R^+_0\, ,$$
which clearly means
\begin{equation}\label{cond-k}
\frac{1}{\xi_1}<1-\frac{\kappa p_1+ 1}{(\kappa+ 1) p_1^*},\quad\kappa\in\R^+_0\, .
\end{equation}
Indeed, the function $\kappa\mapsto\frac{\kappa p_1+1}{(\kappa+ 1) p_1^*}$ is increasing on $\R^+_0$ and tends to
$\frac{p_1}{p_1^*}$ as $k\to\infty$. So, \eqref{cond-k} holds true, because $\frac{1}{\xi_1}<1-\frac{p_1}{p_1^*}$; see Remark \ref{R1}. Now, Moser's iteration can start. If there exists a sequence $\{\kappa_n\}\subseteq\R^+_0$ fulfilling
\begin{equation*}
\lim_{n\to\infty}\kappa_n=+\infty,\quad\Vert u\Vert_{(\kappa_n+1)p_1^*}\leq 1\;\;\forall\, n\in\N
\end{equation*}
then $\Vert u\Vert_{L^\infty(\Omega_1)}\leq 1$. Otherwise, with appropriate $\kappa_0>0$, one has
\begin{equation}\label{kappa0}
\Vert u\Vert_{(\kappa+ 1)p_1^*}>1\;\;\text{for any}\;\;\kappa>\kappa_0,\;\;\text{besides}\;\;\Vert u \Vert_{(\kappa_0+ 1) p_1^*} \leq 1.
\end{equation}
Pick $\kappa_1>\kappa_0$ such that $(\kappa_1 p_1+1)\xi_1'=(\kappa_0+1) p_1^*$, set $\kappa:=\kappa_1$ in \eqref{dis}, and use \eqref{kappa0} to arrive at
\begin{equation}\label{uno}
\begin{split}
& \Vert u\Vert_{(\kappa_1 +1)p_1^*}\\
& \leq C_3^{\eta(\kappa_1)}\sigma(\kappa_1)\left(1+\Vert v\Vert_{p_2^*}^{\beta_1}\right)^{\eta(\kappa_1)}
\Vert u\Vert_{(\kappa_0+1)p_1^*}^{\frac{\kappa_1 p_1+1}{(\kappa_1+1)p_1}}\\
& \leq C_3^{\eta(\kappa_1)}\sigma(\kappa_1)\left(1+\Vert v\Vert _{p_2^*}^{\beta_1}\right)^{\eta(\kappa_1)}.
\end{split}
\end{equation}
Choose next $\kappa_2>\kappa_0$ satisfying $(\kappa_2 p_1+1)\xi_1'= (\kappa_1+1)p_1^*$. From \eqref{dis}, written for
$\kappa:= \kappa_2$, as well as \eqref{kappa0}--\eqref{uno}  it follows
\begin{equation*}
\begin{split}
& \Vert u\Vert_{(\kappa_2 +1)p_1^*}\\
& \leq C_3^{\eta(\kappa_2)}\sigma(\kappa_2)\left(1+\left\Vert v\right\Vert_{p_2^*}^{\beta_1}\right))^{\eta(\kappa_2)}
\Vert u\Vert_{(\kappa_1+1)p_1^*}^{\frac{\kappa_2 p_1+1}{(\kappa_2 +1)p_1}}\\
& \leq C_3^{\eta(\kappa_2)}\sigma(\kappa_2)\left(1+\left\Vert v\right\Vert_{p_2^*}^{\beta_1}\right)^{\eta(\kappa_2)}
\Vert u\Vert_{(\kappa_1+1)p_1^*} \\
& \leq C_3^{\eta(\kappa_2)+\eta(\kappa_1)}\sigma(\kappa_2)\sigma(\kappa_1)\left(1+\Vert v\Vert_{p_2^*}^{\beta_1}\right)^{\eta(\kappa_2)+\eta(\kappa_1)}.
\end{split}
\end{equation*}
By induction, we construct a sequence $\{\kappa_n\}\subseteq(\kappa_0,+\infty)$ enjoying the properties below:
\begin{equation}\label{prop1}
(\kappa_n p_1+ 1)\xi_1'=(\kappa_{n-1}+ 1)p_1^*\, ,\quad n\in\N;
\end{equation}
\begin{equation}\label{prop2}
\Vert u\Vert_{(k_n+1)p_1^*}\leq C_3^{\sum_{i=1}^{n}\eta(\kappa_i)} \prod_{i=1}^{n}\sigma(\kappa_i)
\left(1+\Vert v\Vert_{p_2^*}^{\beta_1}\right)^{\sum_{i=1}^{n}\eta(\kappa_i)}
\end{equation}
 for all $n\in\N$. A simple computation based on \eqref{prop1} yields
\begin{equation}\label{asint}
(\kappa_n+1)\simeq(\kappa_0+1) \left(\frac{p_1^*}{p_1 \xi_1'}\right)^n\;\;\text{as}\;\; n\to\infty,
\end{equation}
where $\frac{p_1^*}{p_1\xi_1'}>1$ due to $({\rm j}_4)$ of Remark \ref{R1}. Further, if $C_4>0$ satisfies
$$1<\left[\frac{t+1}{(t p_1+1)^{1/p_1}}\right]^{\frac{1}{\sqrt{t +1}}}\leq C_4\, ,\quad t\in\R^+_0\, ,$$
(cf. \cite[p. 116]{DKN}) then
\begin{equation*}
\begin{split}
& \prod_{i=1}^{n}\sigma(\kappa_i)
=\prod_{i=1}^{n}\left[\frac{\kappa_i+1}{(\kappa_i p_1+1)^{1/p_1}}\right]^{\frac{1}{\kappa_i +1}}\\
& =\prod_{i=1}^{n}\left\{\left[\frac{\kappa_i+1}{(\kappa_i p_1+1)^{1/p_1}}\right]^{\frac{1}{\sqrt{\kappa_i +1}}}
\right\}^{\frac{1}{\sqrt{\kappa_i +1}}}\leq C_4^{\sum_{i=1}^{n}\frac{1}{\sqrt{\kappa_i +1}}}.
\end{split}
\end{equation*}
Consequently, \eqref{prop2} becomes
\begin{equation*}
\Vert u\Vert _{(k_n+1)p_1^*} \leq C_3^{\sum_{i=1}^{n}\eta(\kappa_i)} C_4^{\sum_{i=1}^{n} \frac{1}{\sqrt{\kappa_i+1}}}
\left(1+\Vert v \Vert_{p_2^*}^{\beta_1}\right)^{\sum_{i=1}^{n}\eta(\kappa_i)}.
\end{equation*}
Since, by \eqref{asint}, both $\kappa_n+1\to+\infty$ and $\frac{1}{\kappa_n+1}\simeq\frac{1}{\kappa_0+1}
\left(\frac{p_1\xi_1'}{p_1^*}\right)^n$, while \eqref{apin1} entails $\Vert v\Vert_{p^*_2}\leq\rho$, there exists a constant $C_5>0$ such that
\begin{equation*}
\Vert u\Vert_{(\kappa_n+1)p_1^*}\leq C_5\quad\forall\, n\in\N,
\end{equation*}
whence $\Vert u\Vert_{L^\infty(\Omega_1)}\leq C_5$. Thus, in either case, $\Vert u\Vert_{L^\infty(\Omega_1)}\leq R$,
with $R:=\max\{1,C_5\}$. A similar argument applies to $v$.
\end{proof}
Using \eqref{apin2}, besides the definition of sets $\Omega_i$, we immediately infer the following
\begin{theorem}\label{T3}
Under assumptions $({\rm H}_{f,g})$ and $({\rm H}_a)$, one has
\begin{equation}\label{ineqT3}
\max\{\Vert u\Vert_\infty,\Vert v\Vert_\infty\}\leq R
\end{equation}
for every weak solution $(u,v)\in \cal{D}^{1,p_1}(\R^N)\times \cal{D}^{1,p_2}(\R^{N})$ to problem \eqref{p}. Here, $R$ is given by Lemma \ref{L3}.
\end{theorem}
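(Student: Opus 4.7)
The statement is essentially a direct bookkeeping consequence of Lemma \ref{L3}, so the plan is short. The idea is simply to decompose $\R^N$ into the super-level set where the Moser iteration applies and its complement, where the function is trivially controlled.

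Concretely, fix a weak solution $(u,v)\in\cal{D}^{1,p_1}(\R^N)\times\cal{D}^{1,p_2}(\R^N)$ of \eqref{p}. I would write $\R^N=\Omega_1\cup(\R^N\setminus\Omega_1)$, where by the very definition $\Omega_1=\{x\in\R^N:u(x)>1\}$, so that $u(x)\leq 1$ pointwise a.e. on $\R^N\setminus\Omega_1$. On $\Omega_1$, Lemma \ref{L3} yields $\|u\|_{L^\infty(\Omega_1)}\leq R$. Inspecting the proof of that lemma one sees that $R=\max\{1,C_5\}\geq 1$ (this is the reason the constant is defined as a maximum in the first place); hence $\|u\|_\infty=\max\{\|u\|_{L^\infty(\Omega_1)},\|u\|_{L^\infty(\R^N\setminus\Omega_1)}\}\leq\max\{R,1\}=R$.

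The identical argument, applied to $v$ with $\Omega_2$ in place of $\Omega_1$ and the $L^\infty$-bound for $v$ from Lemma \ref{L3}, gives $\|v\|_\infty\leq R$. Combining the two estimates delivers \eqref{ineqT3}. There is no real obstacle here beyond recalling that $R$ was chosen to dominate $1$; the substantive work has already been carried out in the three preceding lemmas (the $L^{p_i^*}$ a priori bound, the truncation giving \eqref{trunc1}--\eqref{trunc2}, and the Moser iteration on $\Omega_i$).
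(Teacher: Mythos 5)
Your argument is correct and coincides with the paper's, which dispatches Theorem~\ref{T3} in one line by exactly the same observation: on $\R^N\setminus\Omega_i$ the solution is bounded by $1$ by definition of $\Omega_i$, on $\Omega_i$ Lemma~\ref{L3} applies, and $R=\max\{1,C_5\}\geq 1$ absorbs both bounds. Nothing further is needed.
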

\section{The regularized system}
Assertion $({\rm j}_1)$ of Remark \ref{R1} ensures that $a_i\in L^{(p_i^*)'}(\R^N)$. Therefore, thanks to Minty-Browder's theorem \cite[Theorem V.16]{B}, the equation 
\begin{equation}\label{20}
-\Delta_{p_i} w_i=a_i(x)\quad\text{in}\quad\R^N
\end{equation}
possesses a unique solution $w_i\in\mathcal{D}^{1,p_i}(\R^N)$, $i=1,2$. Moreover, 
\begin{itemize}
\item $w_i>0$, and\\
\item $w_i\in L^\infty(\R^N)$.
\end{itemize}
Indeed, testing \eqref{20} with $\varphi:=w_i^-$ yields $w_i\geq 0$, because $a_i>0$ by $({\rm H}_a)$. Through the strong maximum principle we obtain
$$\essinf_{B_r(x)} w_i>0\;\;\text{for any } r> 0,\, x\in\R^N.$$
Hence, $w_i>0$. Moser's iteration technique then produces $w_i\in L^\infty(\R^N)$.

Next, fix $\eps\in\ ]0,1[$ and define
\begin{equation}\label{6}
(\underline{u},\underline{v})=\left([m_1(R+1)^{\alpha_1}]^{\frac{1}{p_1-1}}w_1,
[m_2(R+1)^{\beta_2}]^{\frac{1}{p_2-1}}w_2\right),  
\end{equation}
\begin{equation*}
(\overline{u}_\eps,\overline{v}_\eps)=\left([ M_1\eps ^{\alpha_1}(1+R^{\beta_1})]^{\frac{1}{p_1-1}}w_1,
[M_2\eps^{\beta_2}(1+R^{\alpha_2})]^{\frac{1}{p_2-1}}w_2\right), 
\end{equation*}
as well as
\begin{equation*}
\mathcal{K}_\eps:=\left\{ (z_1,z_2)\in L^{p_1^*}(\R^N)\times L^{p_2^*}(\R^N):\underline{u}\leq z_1\leq\overline{u}_\eps\, ,\;\underline{v}\leq z_2\leq\overline{v}_\eps\right\}.
\end{equation*}
Obviously, $\mathcal{K}_\eps$ is bounded, convex, closed  in $L^{p_1^*}(\R^N)\times L^{p_2^*}(\R^N)$. Given $(z_1,z_2)\in \mathcal{K}_\eps$, write
\begin{equation}\label{10}
\tilde{z}_i:=\min\{z_i, R\},\quad i=1,2.
\end{equation}
Since, on account of \eqref{10}, hypothesis $({\rm H}_{f,g})$ entails
\begin{eqnarray}\label{9}
a_1 m_1 (R+1)^{\alpha_1}\leq a_1 f(\tilde{z}_1+\eps ,\tilde{z}_2)\leq a_1 M_1\eps^{\alpha_1}(1+R^{\beta_1}),\nonumber\\
\phantom{}\\
a_2 m_2(R+1)^{\beta_2}\leq a_2 g(\tilde{z}_1,\tilde{z}_2+\eps)\leq a_2 M_2 (1+R^{\alpha_2})\eps ^{\beta_2}\nonumber,
\end{eqnarray}
while, recalling Remark \ref{R1}, $a_i\in L^{(p_i^*)'}(\R^N)$, the functions 
$$x\mapsto a_1(x) f(\tilde{z}_1(x)+\eps ,\tilde{z}_2(x)),\quad x\mapsto a_2(x) g(\tilde{z}_1(x),\tilde{z}_2(x)+\eps)$$
belong to $\mathcal{D}^{-1,p_1'}(\R^N)$ and $\mathcal{D}^{-1,p_2'}(\R^N)$, respectively.  Consequently, by Minty-Browder's theorem again, there exists a unique weak solution $(u_\eps,v_\eps)$ of the problem
\begin{equation}\label{prr}
\left\{ 
\begin{array}{ll}
-\Delta_{p_1} u=a_1(x) f(\tilde{z}_1(x)+\eps ,\tilde{z}_2(x)) & \text{in }\R^N, \\
-\Delta _{p_2} v=a_2(x) g(\tilde{z}_1(x),\tilde{z}_2(x)+\eps) & \text{in }\R^N, \\
u_\eps,v_\eps>0 & \text{in }\R^N.
\end{array}
\right. 
\end{equation}
Let $\mathcal{T}:\mathcal{K}_\eps\to  L^{p_1^*}(\R^N)\times L^{p_2^*}(\R^N)$ be defined by $\mathcal{T}(z_1,z_2)=(u_\eps ,v_\eps)$ for every $(z_1,z_2)\in\mathcal{K}_\eps$.
\begin{lemma}\label{L1}
One has $\underline{u}\leq u_\eps\leq \overline{u}_\eps$ and $\underline{v}\leq v_\eps\leq\overline{v}_\eps$. So, in particular, $\mathcal{T}(\mathcal{K}_\eps)\subseteq\mathcal{K}_\eps$.
\end{lemma}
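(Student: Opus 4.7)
The strategy is a straightforward weak comparison argument rooted in the construction of $\underline{u},\overline{u}_\eps,\underline{v},\overline{v}_\eps$. Because $-\Delta_{p_i}$ is $(p_i-1)$-homogeneous and $-\Delta_{p_i}w_i=a_i$, one has
\begin{equation*}
-\Delta_{p_1}\underline{u}=m_1(R+1)^{\alpha_1}a_1,\qquad -\Delta_{p_1}\overline{u}_\eps=M_1\eps^{\alpha_1}(1+R^{\beta_1})a_1
\end{equation*}
in $\mathcal{D}^{-1,p_1'}(\R^N)$, and analogously for $\underline{v},\overline{v}_\eps$ with $p_2$. By \eqref{9} the right-hand side of the first equation in \eqref{prr} is sandwiched pointwise a.e. between those of the equations for $\underline{u}$ and $\overline{u}_\eps$, and similarly in the second component. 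So the two-sided bounds ought to follow just from monotonicity of $-\Delta_{p_i}$.

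To establish the lower bound $\underline{u}\leq u_\eps$, I would test the difference of the equations for $\underline{u}$ and $u_\eps$ against $\varphi:=(\underline{u}-u_\eps)^+\in\mathcal{D}^{1,p_1}(\R^N)_+$. The right-hand side collapses to
\begin{equation*}
\int_{\R^N}\bigl[m_1(R+1)^{\alpha_1}-f(\tilde{z}_1+\eps,\tilde{z}_2)\bigr]\,a_1\,\varphi\,dx\leq 0
\end{equation*}
by \eqref{9} and $a_1,\varphi\geq 0$, while the left-hand side, since $\nabla\varphi=\chi_{\{\underline{u}>u_\eps\}}(\nabla\underline{u}-\nabla u_\eps)$, equals
\begin{equation*}
\int_{\R^N(\underline{u}>u_\eps)}\bigl(|\nabla\underline{u}|^{p_1-2}\nabla\underline{u}-|\nabla u_\eps|^{p_1-2}\nabla u_\eps\bigr)\cdot(\nabla\underline{u}-\nabla u_\eps)\,dx\geq 0
\end{equation*}
by monotonicity of $\xi\mapsto|\xi|^{p_1-2}\xi$. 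Hence both sides vanish; strict monotonicity then forces $\nabla\varphi=0$ a.e., so $\varphi$ is a constant function in $\mathcal{D}^{1,p_1}(\R^N)\hookrightarrow L^{p_1^*}(\R^N)$, which can only be $0$. Thus $\underline{u}\leq u_\eps$. The upper bound $u_\eps\leq\overline{u}_\eps$ is obtained symmetrically by testing with $(u_\eps-\overline{u}_\eps)^+$ and invoking the upper inequality in \eqref{9}; the same scheme applied to the second equation of \eqref{prr} yields $\underline{v}\leq v_\eps\leq\overline{v}_\eps$. Finally, since $\underline{u},\overline{u}_\eps\in L^{p_1^*}(\R^N)$ and $\underline{v},\overline{v}_\eps\in L^{p_2^*}(\R^N)$ through $\mathcal{D}^{1,p_i}(\R^N)\hookrightarrow L^{p_i^*}(\R^N)$, the inclusion $\mathcal{T}(\mathcal{K}_\eps)\subseteq\mathcal{K}_\eps$ is immediate.

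The only subtlety is to confirm two standard properties of the Beppo-Levi space $\mathcal{D}^{1,p}(\R^N)$: that the positive part of a difference of its elements lies again in $\mathcal{D}^{1,p}(\R^N)_+$ (a lattice property, immediate from its $L^{p^*}$-characterization together with Stampacchia's chain rule), and that a constant function in $\mathcal{D}^{1,p}(\R^N)$ must vanish (because it must belong to $L^{p^*}(\R^N)$). Both facts are recorded in the introduction via \cite[Theorem 8.3]{LL} and the ``vanishing at infinity'' remark. Beyond that, this is the routine $p$-Laplacian weak comparison principle transported to the whole space.
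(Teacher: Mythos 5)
Your argument is correct and follows essentially the same route as the paper: compare $-\Delta_{p_1}\underline{u}=m_1(R+1)^{\alpha_1}a_1$ with $-\Delta_{p_1}u_\eps$ by testing their difference against $(\underline{u}-u_\eps)^+$, use \eqref{9} to see the right-hand side is $\leq 0$, and use monotonicity of the $p$-Laplacian to see the left-hand side is $\geq 0$, forcing $(\underline{u}-u_\eps)^+=0$; the paper cites \cite[Lemma A.0.5]{P} for the monotonicity inequality where you invoke it directly, and you spell out the ``constant in $\mathcal{D}^{1,p}(\R^N)$ is zero'' step that the paper leaves implicit.
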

\begin{proof}
Via \eqref{6}, \eqref{20}, \eqref{prr}, and \eqref{9} we get
\begin{equation*}
\begin{split}
& \int_{\R^N}(-\Delta_{p_1}\underline{u}-(-\Delta_{p_1}u_\eps) )(\underline{u}-u_\eps)^+ dx\\
& =\int_{\R^N}\left(-\Delta_{p_1}(\{m_1 (R+1)^{\alpha_1}\}^{\frac{1}{p_1-1}}w_1)-(-\Delta_{p_1}u_\eps )\right)
(\underline{u}-u_\eps)^+ dx \\ 
& =\int_{\R^N} a_1\left((m_1(R+1)^{\alpha _1}-f(\tilde{z}_1+\eps,\tilde{z}_2)\right) (\underline{u}-u_\eps)^+ dx\leq 0.
\end{split}
\end{equation*}
Lemma A.0.5 of \cite{P} furnishes
\begin{equation*}
\begin{split}
& \int_{\R^N}(-\Delta_{p_1}\underline{u}-(-\Delta_{p_1}u_\eps) )(\underline{u}-u_\eps)^+ dx\\
& =\int_{\R^N}\left(|\nabla\underline{u}|^{p_1-2}\nabla\underline{u}-|\nabla u_\eps|^{p_1-2}\nabla u_\eps\right)
\nabla (\underline{u}-u_\eps)^+ dx\geq 0.
\end{split}
\end{equation*}
Therefore,
\begin{equation*}
\int_{\R^N}\left(-\Delta_{p_1}\underline{u}-(-\Delta_{p_1} u_\eps)\right) (\underline{u}-u_\eps)^+ dx=0,
\end{equation*}
which implies $(\underline{u}-u_\eps)^+=0$, i.e., $\underline{u}\leq u_\eps$. The remaining inequalities can be verified in a similar way.
\end{proof}
\begin{lemma}\label{L2}
The operator $\mathcal{T}$ is continuous and compact.
\end{lemma}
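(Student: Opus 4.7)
The plan is to split the verification into a uniform a priori bound on the image, sequential continuity, and precompactness. The unifying observation is that the truncation $\tilde z_i\leq R$ built into \eqref{10} strips every pointwise dependence of the right-hand sides of \eqref{prr} on $(z_1,z_2)\in\cal K_\eps$: by \eqref{9} one has the envelope
\[
a_1 f(\tilde z_1+\eps,\tilde z_2)\leq a_1 M_1\eps^{\alpha_1}(1+R^{\beta_1}),
\]
and likewise for $g$, both majorants lying in $L^{(p_i^*)'}(\R^N)$ by $({\rm j}_1)$ of Remark \ref{R1}. Testing \eqref{prr} with $u_\eps$ itself, and then invoking H\"older's inequality together with $\cal D^{1,p_1}(\R^N)\hookrightarrow L^{p_1^*}(\R^N)$, produces
\[
\|\nabla u_\eps\|_{p_1}^{p_1}\leq M_1\eps^{\alpha_1}(1+R^{\beta_1})\,\|a_1\|_{(p_1^*)'}\,\|u_\eps\|_{p_1^*}\leq C_\eps\,\|\nabla u_\eps\|_{p_1},
\]
which yields a bound $\|\nabla u_\eps\|_{p_1}\leq C_\eps'$ uniform over $\cal K_\eps$; the analogous estimate holds for $v_\eps$.

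For continuity, I take $(z_{1,n},z_{2,n})\to(z_1,z_2)$ in $L^{p_1^*}(\R^N)\times L^{p_2^*}(\R^N)$ and write $(u_n,v_n):=\cal T(z_{1,n},z_{2,n})$. From any subsequence I extract a further one with $z_{i,n}\to z_i$ a.e., hence $\tilde z_{i,n}\to\tilde z_i$ a.e.; continuity of $f,g$ and the uniform envelope above imply, via dominated convergence, that $a_1 f(\tilde z_{1,n}+\eps,\tilde z_{2,n})\to a_1 f(\tilde z_1+\eps,\tilde z_2)$ strongly in $L^{(p_1^*)'}(\R^N)$, and likewise for the right-hand side of the second equation. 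By the uniform $\cal D^{1,p_1}$-bound I may suppose $u_n\rightharpoonup u^*$ weakly in $\cal D^{1,p_1}(\R^N)$, so $u_n-u^*\rightharpoonup 0$ in $L^{p_1^*}(\R^N)$, and the strong--weak pairing
\[
\langle -\Delta_{p_1}u_n,u_n-u^*\rangle=\int_{\R^N}a_1 f(\tilde z_{1,n}+\eps,\tilde z_{2,n})(u_n-u^*)\,dx\longrightarrow 0
\]
together with Proposition \ref{Sprop} forces $u_n\to u^*$ in $\cal D^{1,p_1}(\R^N)$. Passing to the limit in the weak form of \eqref{prr} identifies $u^*$ with the unique Minty--Browder solution attached to $(z_1,z_2)$, i.e.\ $u^*=\cal T(z_1,z_2)_1$, and the standard Urysohn subsequence principle lifts this to convergence of the whole sequence. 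The argument for $v_n$ is verbatim, and $\cal D^{1,p_i}(\R^N)\hookrightarrow L^{p_i^*}(\R^N)$ gives continuity in the prescribed norm.

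The compactness step is the delicate one, because the Sobolev embedding $\cal D^{1,p_i}(\R^N)\hookrightarrow L^{p_i^*}(\R^N)$ is \emph{not} compact on the whole space. For arbitrary $(z_{1,n},z_{2,n})\subseteq\cal K_\eps$, Step 1 gives $\cal D^{1,p_1}$-boundedness of $(u_n)$ and Lemma \ref{L1} provides the pointwise cap $0\leq u_n\leq\overline u_\eps\in L^{p_1^*}(\R^N)$. Extracting $u_n\rightharpoonup u^*$ weakly in $\cal D^{1,p_1}(\R^N)$, I then apply Rellich's theorem on each ball $B_k(0)$ and a diagonal selection to secure $u_n\to u^*$ a.e.\ in $\R^N$; since $|u_n-u^*|\leq 2\overline u_\eps\in L^{p_1^*}(\R^N)$, dominated convergence promotes this to strong convergence in $L^{p_1^*}(\R^N)$. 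The pointwise cap built into $\cal K_\eps$ is precisely what compensates for the missing global compact embedding, and this is the main technical hurdle of the proof; the same scheme applied to $v_n$ shows that $\cal T(\cal K_\eps)$ is relatively compact in $L^{p_1^*}(\R^N)\times L^{p_2^*}(\R^N)$.
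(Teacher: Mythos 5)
Your proof is correct, and the compactness and a priori bound steps coincide with the paper's. Where you diverge is in the continuity step. The paper gets strong $L^{p_i^*}$-convergence of $(u_n,v_n)$ by the same a.e.-plus-dominated-convergence scheme you use for compactness, and essentially takes for granted that the limit is $\mathcal T(z_1,z_2)$ (it sets $(u,v):=\mathcal T(z_1,z_2)$ up front and then writes $(u_n,v_n)\rightharpoonup(u,v)$ without justifying the identification). You instead first upgrade the data: you show $a_1 f(\tilde z_{1,n}+\eps,\tilde z_{2,n})\to a_1 f(\tilde z_1+\eps,\tilde z_2)$ strongly in $L^{(p_1^*)'}(\R^N)$, pair this against the weak convergence $u_n-u^*\rightharpoonup 0$ in $L^{p_1^*}(\R^N)$ to get $\langle -\Delta_{p_1}u_n,u_n-u^*\rangle\to 0$, and then invoke the $({\rm S})_+$-property (Proposition~\ref{Sprop}) to obtain strong $\mathcal D^{1,p_1}$-convergence, after which passing to the limit in the weak form and using uniqueness of the Minty--Browder solution pins down $u^*=\mathcal T(z_1,z_2)_1$; the Urysohn subsequence principle then removes the subsequence extraction. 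This is strictly more than is required for $L^{p_i^*}$-continuity, but it buys an explicit and airtight limit identification that the paper's shorter argument leaves implicit. Both routes are sound; yours is longer but more self-contained on that one point.
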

\begin{proof}
Pick a sequence $\{(z_{1,n},z_{2,n})\}\subseteq\mathcal{K}_\eps$ such that
$$(z_{1,n},z_{2,n})\to (z_1,z_2)\quad\text{in}\quad L^{p_1^*}(\R^N)\times L^{p_2^*}(\R^N).$$
If $(u_n,v_n):=\mathcal{T}(z_{1,n},z_{2,n})$ and $(u,v):=\mathcal{T}(z_1,z_2)$ then
\begin{eqnarray}
\int_{\R^N}|\nabla u_n|^{p_1-2}\nabla u_n\nabla \varphi\, dx  
=\int_{\R^N } a_1 f(\tilde{z}_{1,n}+\eps,\tilde{z}_{2,n})\varphi\, dx, \label{equa1}\\
\int_{\R^N}|\nabla v_n|^{p_2-2}\nabla v_n\nabla \psi\, dx
=\int_{\R^N} a_2 g(\tilde{z}_{1,n},\tilde{z}_{2,n}+\eps)\psi\, dx,\label{equa1b}\\
\int_{\R^N}|\nabla u|^{p_1-2}\nabla u\nabla\varphi\, dx=\int_{\R^N} a_1 f(\tilde{z}_1+\eps,\tilde{z}_2)\varphi\, dx,\nonumber\\
\int_{\R^N}|\nabla v|^{p_2-2}\nabla v\nabla\psi\, dx =\int_{\R^N} a_2 g(\tilde{z}_1,\tilde{z}_2+\eps)\psi\, dx\nonumber
\end{eqnarray}
for every $(\varphi,\psi )\in\mathcal{D}^{1,p_1}(\R^N)\times\mathcal{D}^{1,p_2}(\R^N)$. Set $\varphi:=u_n$ in \eqref{equa1}. From \eqref{9} it follows, after using H\"{o}lder's inequality,
\begin{equation*}
\begin{split}
\Vert \nabla u_n\Vert_{p_1}^{p_1} & =\int_{\R^N} a_1 f(\tilde{z}_{1,n}+\eps,\tilde{z}_{2,n}) u_n\, dx \\
& \leq M_1\int_{\R^N} a_1 \eps^{\alpha_1}(1+R^{\beta_1})u_n\, dx\leq C_\eps\int_{\R^N}a_1 u_n\, dx \\
& \leq C_\eps\Vert a_1\Vert_{(p_1^*)'} \Vert u_n\Vert_{p_1^*}\leq C_\eps\Vert a_1\Vert_{(p_1^*)'}
\Vert \nabla u_n\Vert_{p_1}\;\;\forall\, n\in\N,
\end{split}
\end{equation*}
where $C_\eps:=M_1\eps^{\alpha_1}(1+R^{\beta_1})$. This actually means that $\{u_n\}$ is bounded in $\mathcal{D}^{1,p_1}(\R^N)$, because $p_1>1$. By \eqref{equa1b}, an analogous conclusion holds for $\{v_n\}$. Along subsequences if necessary, we may thus assume
\begin{equation}\label{wone}
(u_n,v_n)\rightharpoonup (u,v)\;\;\text{in}\;\;\mathcal{D}^{1,p_1}(\R^N)\times\mathcal{D}^{1,p_2}(\R^N).
\end{equation}
So, $\{(u_n,v_n)\}$ converges strongly in $L^{q_1}(B_{r_1})\times L^{q_2}(B_{r_2})$ for any $r_i>0$ and any $1\leq q_i\leq p_i^*$, whence, up to subsequences again,
\begin{equation}\label{wtwo}
(u_n,v_n)\to (u,v)\;\;\text{a.e. in}\;\;\R^N.
\end{equation}
Now, combining Lemma \ref{L1} with Lebesgue's dominated convergence theorem, we obtain
\begin{equation}\label{convpair}
(u_n,v_n)\to (u,v)\;\;\text{in}\;\; L^{p_1^*}(\R^N)\times L^{p_2^*}(\R^N),
\end{equation}
as desired. Let us finally verify that $\mathcal{T}(\cal{K}_\eps)$ is relatively compact. If $(u_n,v_n):=\mathcal{T}(z_{1,n},z_{2,n})$, $n\in\N$, then \eqref{equa1}--\eqref{equa1b} can be written. Hence, the previous argument yields a pair $(u,v)\in L^{p_1^*}(\R^N)\times L^{p_2^*}(\R^N)$ fulfilling \eqref{convpair}, possibly along a subsequence. This completes the proof.
\end{proof}
Thanks to Lemmas \ref{L1}--\ref{L2}, Schauder's fixed point theorem applies, and there exists $(u_\eps,v_\eps)\in\mathcal{K}_\eps$ such that $(u_\eps,v_\eps)=\mathcal{T}(u_\eps,v_\eps)$. Through Theorem \ref{T3}, we next arrive at
\begin{theorem}\label{T2}
Under hypotheses $({\rm H}_{f,g})$ and $({\rm H}_a)$, for every $\eps>0$ small, problem \eqref{pr} admits a solution
$\left(u_\eps,v_\eps\right)\in\mathcal{D}^{1,p_1}(\R^N)\times\mathcal{D}^{1,p_2}(\R^N)$ complying with \eqref{ineqT3}.
\end{theorem}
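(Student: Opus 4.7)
The plan is to produce the solution via Schauder's fixed point theorem and then promote it to a solution of \eqref{pr} by reusing the a priori analysis of Section~\ref{S6}. Lemma~\ref{L1} gives $\mathcal{T}(\mathcal{K}_\eps)\subseteq\mathcal{K}_\eps$, Lemma~\ref{L2} that $\mathcal{T}$ is continuous and compact, and $\mathcal{K}_\eps$ is evidently non-empty, bounded, convex and closed in $L^{p_1^*}(\R^N)\times L^{p_2^*}(\R^N)$. Schauder's theorem thus delivers $(u_\eps,v_\eps)\in\mathcal{K}_\eps$ with $(u_\eps,v_\eps)=\mathcal{T}(u_\eps,v_\eps)$. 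By construction, the pair weakly satisfies
\begin{equation*}
-\Delta_{p_1}u_\eps=a_1\,f(\tilde u_\eps+\eps,\tilde v_\eps),\qquad -\Delta_{p_2}v_\eps=a_2\,g(\tilde u_\eps,\tilde v_\eps+\eps),
\end{equation*}
where $\tilde u_\eps:=\min\{u_\eps,R\}$ and $\tilde v_\eps:=\min\{v_\eps,R\}$; positivity a.e.\ follows from the lower bounds $u_\eps\geq\underline u>0$, $v_\eps\geq\underline v>0$ in Lemma~\ref{L1}.

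The decisive step is to show $\|u_\eps\|_\infty,\|v_\eps\|_\infty\leq R$, which both erases the truncations (so that $(u_\eps,v_\eps)$ actually solves \eqref{pr}) and yields \eqref{ineqT3}. To this end I would rerun the three lemmas of Section~\ref{S6} on the truncated regularized system. Since $\alpha_1<0$ and $R\geq 1$, on $\{u_\eps>1\}$ one has $\tilde u_\eps\geq 1$, whence $(\tilde u_\eps+\eps)^{\alpha_1}\leq 1$; combined with $\tilde v_\eps^{\beta_1}\leq v_\eps^{\beta_1}$ this forces $f(\tilde u_\eps+\eps,\tilde v_\eps)\leq M_1(1+v_\eps^{\beta_1})$ on $\{u_\eps>1\}$, so that the truncation inequality \eqref{trunc1} holds verbatim for $u_\eps$; a symmetric argument delivers \eqref{trunc2} for $v_\eps$. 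For the initial $L^{p_i^*}$-estimate, testing the truncated equations with $u_\eps$, $v_\eps$ and splitting each integral across $\{u_\eps\leq 1\}$ and $\{u_\eps>1\}$ one gets $(\tilde u_\eps+\eps)^{\alpha_1}u_\eps\leq u_\eps^{\alpha_1+1}\leq 1$ on the former, and $(\tilde u_\eps+\eps)^{\alpha_1}u_\eps\leq u_\eps$ on the latter; plugging these bounds into the Sobolev and H\"older chain used in the first lemma of Section~\ref{S6} and exploiting \eqref{c2} produces an $\eps$-independent $\rho$ with $\|u_\eps\|_{p_1^*}+\|v_\eps\|_{p_2^*}\leq\rho$.

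With the uniform $L^{p_i^*}$-bound and the truncation inequalities secured, the Moser iteration of Lemma~\ref{L3} runs unaltered to give $\|u_\eps\|_{L^\infty(\Omega_1)},\|v_\eps\|_{L^\infty(\Omega_2)}\leq R$; combined with $u_\eps\leq 1$ off $\Omega_1$ and $v_\eps\leq 1$ off $\Omega_2$ this yields \eqref{ineqT3}. Since $u_\eps,v_\eps\leq R$, the truncations become inactive, i.e.\ $\tilde u_\eps=u_\eps$ and $\tilde v_\eps=v_\eps$, so the truncated system collapses to \eqref{pr}, finishing the proof. The most delicate point is the uniform $L^{p_i^*}$-estimate: the splitting at level one sacrifices the clean factor $u_\eps^{\alpha_1+1}$ on $\{u_\eps>1\}$ and produces a slightly different coupling between $\|\nabla u_\eps\|_{p_1}$ and $\|\nabla v_\eps\|_{p_2}$ than in Section~\ref{S6}, but the algebraic consequence $\alpha_2\beta_1<(p_1-1)(p_2-1)$ of \eqref{c2} is exactly what allows one to decouple the resulting inequalities independently of $\eps$.
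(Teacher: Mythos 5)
Your argument follows the same route as the paper: Schauder's fixed point theorem applied via Lemmas~\ref{L1}--\ref{L2}, then an invocation of the a~priori analysis of Section~\ref{S6} to deactivate the truncations and obtain \eqref{ineqT3}. The paper compresses the second step into the single phrase ``Through Theorem~\ref{T3}'', whereas you correctly identify that one must re-run the truncation inequality, the $L^{p_i^*}$-estimate, and the Moser iteration on the truncated regularized system, and you sketch why each carries over: on $\{u_\eps>1\}$ the factor $(\tilde u_\eps+\eps)^{\alpha_1}\leq 1$ restores \eqref{trunc1}, while the level-one splitting controls the $L^{p_1^*}$-estimate. This is a faithful and usefully more explicit rendering of the paper's intent rather than a different proof.

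Two small points worth fixing. First, the decoupling inequality $\alpha_2\beta_1<(p_1-1)(p_2-1)$ does \emph{not} follow from \eqref{c2} alone: with $\alpha_1$ close to $-1$, \eqref{c2} only gives $\alpha_2<p_1$, and similarly $\beta_1<p_2$, so their product can exceed $(p_1-1)(p_2-1)$. What actually yields it is the remaining pair of constraints in $({\rm H}_{f,g})$, $\beta_1<\frac{p_2^*}{p_1^*}\min\{p_1-1,p_1^*-p_1\}\leq\frac{p_2^*}{p_1^*}(p_1-1)$ and $\alpha_2<\frac{p_1^*}{p_2^*}\min\{p_2-1,p_2^*-p_2\}\leq\frac{p_1^*}{p_2^*}(p_2-1)$, whose product is exactly $(p_1-1)(p_2-1)$. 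Second, since your rerun of Section~\ref{S6} on the truncated system replaces the exponent $\alpha_1+1$ by $1$ on $\{u_\eps>1\}$, the constants $\rho$ and $R$ it produces need not coincide with those of the original Lemma~\ref{L3}; however, the bounds you derive are independent of the truncation level (only $R\geq 1$ is used), so one simply fixes $R$ from the outset as the larger of the two constants and the argument closes without circularity. Apart from these cosmetic adjustments, the proposal is correct.
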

\section{Existence of solutions}\label{S4}
We are now ready to establish the main result of this paper.
\begin{theorem}\label{mainthm}
Let $({\rm H}_{f,g})$ and $({\rm H}_a)$ be satisfied. Then \eqref{p} has a weak solution $(u,v)\in\mathcal{D}^{1,p_1}(\R^N)\times\mathcal{D}^{1,p_2}(\R^N)$, which is essentially bounded.
\end{theorem}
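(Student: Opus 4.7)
The plan is to pass to the limit in the family $(u_\eps,v_\eps)$ supplied by Theorem~\ref{T2}. Fix $\eps_n\downarrow 0$ and set $(u_n,v_n):=(u_{\eps_n},v_{\eps_n})$. Lemma~\ref{L1} retains the uniform pointwise lower bounds $u_n\geq\underline{u}$, $v_n\geq\underline{v}$ (independent of~$\eps_n$), while the $L^{p_i^*}$-bound of the first lemma in Section~\ref{S6} and the Moser iteration of Lemma~\ref{L3} both carry over verbatim to $({\rm P}_{\eps_n})$, because $(u+\eps)^{\alpha_1}\leq u^{\alpha_1}$ and $(v+\eps)^{\beta_2}\leq v^{\beta_2}$ (recall $\alpha_1,\beta_2<0$) preserve all the $f,g$-majorisations used in those proofs. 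Thus $(u_n,v_n)$ is bounded in $\mathcal{D}^{1,p_1}(\R^N)\times\mathcal{D}^{1,p_2}(\R^N)$ and $\max\{\|u_n\|_\infty,\|v_n\|_\infty\}\leq R$ uniformly in~$n$. Along a subsequence,
$$(u_n,v_n)\rightharpoonup(u,v)\ \text{in}\ \mathcal{D}^{1,p_1}(\R^N)\times\mathcal{D}^{1,p_2}(\R^N),\quad (u_n,v_n)\to(u,v)\ \text{a.e. in}\ \R^N,$$
and passing to the a.e.-limit in the pointwise bounds yields $\underline{u}\leq u\leq R$, $\underline{v}\leq v\leq R$ a.e.

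To upgrade weak to strong convergence I would invoke Proposition~\ref{Sprop}. It suffices to show
$$\limsup_{n\to\infty}\int_{\R^N}a_1 f(u_n+\eps_n,v_n)(u_n-u)\,dx\leq 0$$
together with its analogue for $v_n$. Split $u_n-u=(u_n-u)^+-(u_n-u)^-$: on $\{u_n\geq u\}$, $(u_n-u)^+\leq u_n$ combined with $(u_n+\eps_n)^{\alpha_1}\leq u_n^{\alpha_1}$ dominates the integrand by $M_1(1+R^{\beta_1})R^{\alpha_1+1}a_1$, which is in $L^1(\R^N)$ by $({\rm H}_a)$; on $\{u_n<u\}$, $(u_n-u)^-\leq u\leq R$ and $u_n+\eps_n\geq\underline{u}$ dominate by $M_1 R(1+R^{\beta_1})\,a_1\underline{u}^{\alpha_1}$. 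Since the integrands converge to~$0$ a.e., Lebesgue dominated convergence yields the limsup estimate, and Proposition~\ref{Sprop} produces $u_n\to u$ in $\mathcal{D}^{1,p_1}(\R^N)$ and $v_n\to v$ in $\mathcal{D}^{1,p_2}(\R^N)$.

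Strong convergence implies $|\nabla u_n|^{p_i-2}\nabla u_n\to|\nabla u|^{p_i-2}\nabla u$ in $L^{p_i'}(\R^N;\R^N)$, so the left-hand sides of the weak formulation of $({\rm P}_{\eps_n})$ pass to the limit against any test functions $\varphi\in\mathcal{D}^{1,p_1}(\R^N)$, $\psi\in\mathcal{D}^{1,p_2}(\R^N)$. For the right-hand sides, continuity of $f,g$ at the points where $u,v>0$ gives a.e.~convergence of $a_1 f(u_n+\eps_n,v_n)\varphi$ and $a_2 g(u_n,v_n+\eps_n)\psi$ to their natural limits, and the same domination—first reducing to $\varphi,\psi\in C_0^\infty(\R^N)$ and then extending by density—permits DCT to conclude. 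Hence $(u,v)$ is a weak solution of~\eqref{p}, and essential boundedness is precisely \eqref{ineqT3} applied to this limit pair.

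The step I expect to be the main obstacle is the $L^1$-integrability of the singular weight $a_1\underline{u}^{\alpha_1}$ (and the symmetric $a_2\underline{v}^{\beta_2}$) used to dominate the integrand on $\{u_n<u\}$. Since $\underline{u}=c\,w_1$ vanishes at infinity while $\alpha_1<0$, $\underline{u}^{\alpha_1}$ blows up there, so this integrability is not automatic: it must be extracted from the full strength of $({\rm H}_a)$ together with the $L^\infty(\R^N)\cap\mathcal{D}^{1,p_1}(\R^N)$ information on $w_1$ coming from $-\Delta_{p_1}w_1=a_1$, and possibly from refined decay estimates at infinity inherited by $w_1$ from $a_1$.
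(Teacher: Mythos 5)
Your proposal follows the paper's route essentially step for step: pass to the limit along $\eps_n\downarrow 0$ in the solutions of the regularized systems, extract weak and a.e.~limits using the uniform $\mathcal{D}^{1,p_i}$- and $L^\infty$-bounds, upgrade to strong convergence via the $({\rm S})_+$ property (Proposition~\ref{Sprop}), and finally pass to the limit in the weak formulation by dominated convergence, reading off essential boundedness from \eqref{ineqT3}. Structurally there is nothing new.

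The one place where you and the paper diverge in substance is the domination step before invoking $({\rm S})_+$, and here your instinct is good but your conclusion is overly pessimistic. The obstacle you flag --- $L^1$-integrability of $a_1\underline{u}^{\alpha_1}$, needed to dominate the integrand on $\{u_n<u\}$ --- is in fact irrelevant, because you only need
\[
\limsup_{n\to\infty}\int_{\R^N}a_1 f(u_n+\eps_n,v_n)(u_n-u)\,dx\leq 0,
\]
not full convergence of the integral. After your own split, on $\{u_n<u\}$ the integrand $a_1 f(u_n+\eps_n,v_n)(u_n-u)$ is \emph{nonpositive}, so that portion of the integral only pushes the $\limsup$ down; you may discard it entirely. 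Only the positive part $\{u_n\geq u\}$ needs a dominating function, and there you already have $M_1(1+R^{\beta_1})R^{\alpha_1+1}a_1\in L^1(\R^N)$. Thus
\[
\int_{\R^N}a_1 f(u_n+\eps_n,v_n)(u_n-u)\,dx
\leq\int_{\{u_n\geq u\}}a_1 f(u_n+\eps_n,v_n)(u_n-u)\,dx\longrightarrow 0,
\]
and the $({\rm S})_+$ argument closes without any further integrability of $a_1\underline{u}^{\alpha_1}$. Incidentally, the paper does not perform the split: it asserts the pointwise bound $|a_1 f(u_n+\tfrac1n,v_n)(u_n-u)|\leq 2M_1R^{\alpha_1+1}(1+R^{\beta_1})a_1$ directly, which is hard to justify precisely for the reason you noticed (the factor $u_n^{\alpha_1}$ with $\alpha_1<0$ cannot be controlled by powers of $R$ alone on $\{u_n<u\}$); your split-and-discard argument is actually the cleaner repair. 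The same concern reappears in the final passage to the limit in \eqref{right}--\eqref{right2}, where the paper again invokes that bound with $|\varphi|$ in place of $|u_n-u|$; your idea of first restricting to $\varphi\in C_0^\infty(\R^N)$ (where $\underline{u}$ is locally bounded away from zero, so $a_1\underline{u}^{\alpha_1}|\varphi|\chi_{\mathrm{supp}\,\varphi}$ is trivially in $L^1$) and then passing to the identity on all of $\mathcal{D}^{1,p_1}(\R^N)$ by density is a sensible way to make the paper's assertion rigorous. So: same strategy, and the gap you feared is not a gap once you notice the sign of the discarded half.
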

\begin{proof}
Pick $\eps:=\frac{1}{n}$, with $n\in\N$ big enough. Theorem \ref{T2} gives a pair $(u_n,v_n)$, where $u_n:=u_{\frac{1}{n}}$ and $v_n:=v_{\frac{1}{n}}$, such that
\begin{equation}\label{122}
\begin{split}
\int_{\R^N}\vert\nabla u_n\vert^{p_1-2}\nabla u_n\nabla\varphi \, dx & =\int_{\R^N} a_1 f(u_n+\frac{1}{n},v_n)\varphi\, dx, \\ 
\int_{\R^N}\vert\nabla v_n\vert^{p_2-2}\nabla v_n\nabla\psi\, dx & =\int_{\R^N}a_2 g(u_n,v_n+\frac{1}{n})\psi\, dx
\end{split}
\end{equation}
for every $(\varphi,\psi )\in\mathcal{D}^{1,p_1}(\R^N)\times\mathcal{D}^{1,p_2}(\R^N)$, as well as (cf. Lemma \ref{L1})
\begin{equation}\label{13}
0<\underline{u}\leq u_n\leq R,\quad 0<\underline{v}\leq v_n\leq R.
\end{equation}
Thanks to $({\rm H}_{f,g})$, \eqref{13}, and $({\rm H}_a)$, choosing $\varphi:=u_n$, $\psi:=v_n$ in \eqref{122} easily entails
\begin{equation*}
\begin{split}
\Vert\nabla u_n\Vert_{p_1}^{p_1} & \leq M_1\int_{\R^N} a_1 u_n^{\alpha_1+1}(1+v_n^{\beta_1}) dx
\leq M_1 R^{\alpha_1+1}(1+R^{\beta_1})\Vert a_1\Vert_1\, ,\\
\Vert\nabla v_n\Vert_{p_2}^{p_2} & \leq M_2\int_{\R^N} a_2 (1+u_n^{\alpha_2})v_n^{\beta_2+1} dx 
\leq M_2(1+R^{\alpha_2})R^{\beta_2+1}\Vert a_2\Vert_1,
\end{split}
\end{equation*}
whence both $\{u_n\}\subseteq\mathcal{D}^{1,p_1}(\R^N) $ and $\{v_n\}\subseteq\mathcal{D}^{1,p_2}(\R^N)$ are bounded. Along subsequences if necessary, we thus have \eqref{wone}--\eqref{wtwo}. Let us next show that 
\begin{equation}\label{15}
(u_n,v_n)\to (u,v)\quad\text{strongly in}\quad\mathcal{D}^{1,p_1}(\R^N)\times\mathcal{D}^{1,p_2}(\R^N).
\end{equation}
Testing the first equation in \eqref{122} with $\varphi:=u_n-u$ yields
\begin{equation}\label{new1}
\int_{\R^N}\vert \nabla u_n\vert ^{p_1-2}\nabla u_n\nabla (u_n-u) dx=\int_{\R^N} a_1f(u_n+\frac{1}{n},v_n)(u_n-u) dx.
\end{equation}
The right-hand side of \eqref{new1} ges to zero as $n\to\infty$. Indeed, by $({\rm H}_{f,g})$, \eqref{13}, and $({\rm H}_a)$ again,
\begin{equation*}
|a_1f(u_n+\frac{1}{n},v_n)(u_n-u)|\leq 2M_1R^{\alpha_1+1}(1+R^{\beta_1})a_1\quad\forall\, n\in\N,
\end{equation*}
so that, recalling \eqref{wtwo}, Lebesgue's dominated convergence theorem applies. Through \eqref{new1} we obtain
$\displaystyle{\lim_{n\to\infty}}\langle -\Delta_{p_1}u_n,u_n-u\rangle=0$. Likewise, $\langle-\Delta _{p_2}v_n,v_n-v\rangle\to 0$
as $n\to\infty$, and \eqref{15} directly follows from Proposition \ref{Sprop}. On account of \eqref{122}, besides \eqref{15}, the final step is to verify that
\begin{equation} \label{right}
\lim_{n\to\infty}\int_{\R^N} a_1 f(u_n+\frac{1}{n},v_n)\varphi\, dx=\int_{\R^N} a_1 f(u,v)\varphi\, dx,
\end{equation}
\begin{equation}\label{right2}
\lim_{n\to\infty}\int_{\R^N} a_2 g(u_n,v_n+\frac{1}{n})\psi\, dx=\int_{\R^N} a_2 g(u,v)\psi\, dx
\end{equation}
for all $(\varphi,\psi)\in\mathcal{D}^{1,p_1}(\R^N)\times\mathcal{D}^{1,p_2}(\R^N)$. If $\varphi\in\mathcal{D}^{1,p_1}(\R^N)$ then $({\rm j}_1)$ in Remark \ref{R1} gives $a_1\varphi\in L^1(\R^N)$. Since, as before,
\begin{equation*}
|a_1f(u_n+\frac{1}{n},v_n)\varphi|\leq M_1R^{\alpha_1+1}(1+R^{\beta_1})a_1|\varphi|,\quad n\in\N,
\end{equation*}
assertion \eqref{right} stems from the Lebesgue dominated convergence theorem. The proof of \eqref{right2} is similar at all.
\end{proof}
\section*{Acknowledgement}
This work is performed within the 2016--2018 Research Plan - Intervention Line 2: `Variational Methods and Differential Equations', and partially supported by GNAMPA of INDAM.
\end{document}